\begin{document}
\title[Discrete TV of the Normal as Shape Prior]{Discrete Total Variation of the Normal Vector Field as Shape Prior with Applications in Geometric Inverse Problems}
\date{August 19, 2019}
\author[R.~Bergmann]{Ronny Bergmann}
\address{Technische Universität Chemnitz, Faculty of Mathematics, 09107 Chemnitz, Germany}
\email{ronny.bergmann@mathematik.tu-chemnitz.de}
\urladdr{https://www.tu-chemnitz.de/mathematik/part\_dgl/people/bergmann}

\author[M.~Herrmann]{Marc Herrmann}
\address{Julius-Maximilians-Universität Würzburg, Faculty of Mathematics and Computer Science, Lehrstuhl für Mathematik~VI, Emil-Fischer-Straße~40, 97074 Würzburg, Germany}
\email{marc.herrmann@mathematik.uni-wuerzburg.de}
\urladdr{https://www.mathematik.uni-wuerzburg.de/~herrmann}

\author[R.~Herzog]{Roland Herzog}
\address{Technische Universität Chemnitz, Faculty of Mathematics, 09107 Chemnitz, Germany}
\email{roland.herzog@mathematik.tu-chemnitz.de}
\urladdr{https://www.tu-chemnitz.de/herzog}

\author[S.~Schmidt]{Stephan Schmidt}
\address{Julius-Maximilians-Universität Würzburg, Faculty of Mathematics and Computer Science, Lehrstuhl für Mathematik~VI, Emil-Fischer-Straße~40, 97074 Würzburg, Germany}
\email{stephan.schmidt@mathematik.uni-wuerzburg.de}
\urladdr{https://www.mathematik.uni-wuerzburg.de/~schmidt}

\author[J.~Vidal-N{\'u}{\~n}ez]{Jos{\'e} Vidal-N{\'u}{\~n}ez}
\address{Technische Universität Chemnitz, Faculty of Mathematics, 09107 Chemnitz, Germany}
\email{jose.vidal-nunez@mathematik.tu-chemnitz.de}
\urladdr{https://www.tu-chemnitz.de/mathematik/part\_dgl/people/vidal}

\begin{abstract}
	An analogue of the total variation prior for the normal vector field along the boundary of piecewise flat shapes in 3D is introduced.
	A major class of examples are triangulated surfaces as they occur for instance in finite element computations.
	The analysis of the functional is based on a differential geometric setting in which the unit normal vector is viewed as an element of the two-dimensional sphere manifold.
	It is found to agree with the discrete total mean curvature known in discrete differential geometry.
	A split Bregman iteration is proposed for the solution of discretized shape optimization problems, in which the total variation of the normal appears as a regularizer.
	Unlike most other priors, such as surface area, the new functional allows for piecewise flat shapes.
	As two applications, a mesh denoising and a geometric inverse problem of inclusion detection type involving a partial differential equation are considered.
	Numerical experiments confirm that polyhedral shapes can be identified quite accurately.
\end{abstract}

\keywords{total variation of the normal; discrete differential geometry; split Bregman iteration; shape optimization; geometric inverse problem; inclusion detection}

\maketitle

\section{Introduction}
\label{sec:Introduction}

The total variation (TV) functional is popular as a regularizer in imaging and inverse problems; see for instance \cite{RudinOsherFatemi1992,ChanGolubMulet1999,BachmayrBurger2009,Langer2017} and \cite[Chapter~8]{Vogel2002}.
It is most commonly applied to functions with values in $\R$ or $\R^n$.
In the companion paper \cite{BergmannHerrmannHerzogSchmidtVidalNunez2019:1_preprint}, we introduced the total variation of the normal vector field~$\bn$ along smooth surfaces $\Gamma \subset \R^3$:

\begin{equation}
	\label{eq:TV_of_normal}
	\abs{\bn}_{TV(\Gamma)}
	\coloneqq
	\int_\Gamma \bigh(){\absRiemannian{(D_\Gamma \bn) \, \bxi_1}^2 + \absRiemannian{(D_\Gamma \bn) \, \bxi_2}^2}^{1/2} \, \ds
	.
\end{equation}
In contrast to the setting of real- or vector-valued functions, the normal vector field is manifold-valued with values in the sphere $\sphere{2} = \{ \bv \in \R^3: \abs{\bv}_2 = 1 \}$.
In \eqref{eq:TV_of_normal}, $D_\Gamma \bn$ denotes the derivative (push-forward) of $\bn$, and $\{\bxi_1(\bs),\bxi_2(\bs)\}$ is an arbitrary orthonormal basis (w.r.t.\ the Euclidean inner product in the embedding $\Gamma \subset \R^3$) of the tangent spaces $\tangent{\bs}{\Gamma}$ along $\Gamma$. 
Finally, $\absRiemannian{\,\cdot\,}$ denotes the norm induced by a Riemannian metric on $\sphere{2}$.
It was shown in \cite{BergmannHerrmannHerzogSchmidtVidalNunez2019:1_preprint} that \eqref{eq:TV_of_normal} can be alternatively expressed as 
\begin{equation*}
	\abs{\bn}_{TV(\Gamma)}
	=
	\int_\Gamma \bigh(){k_1^2 + k_2^2}^{1/2} \, \ds,
\end{equation*}
where $k_1$ and $k_2$ are the principal curvatures of the surface.

In this paper, we discuss a discrete variant of \eqref{eq:TV_of_normal} tailored to piecewise flat surfaces $\Gamma_h$, where \eqref{eq:TV_of_normal} does not apply.
In contrast with the smooth setting, the total variation of the piecewise constant normal vector field $\bn$ is concentrated in jumps across edges between flat facets.
We therefore propose the following \emph{discrete total variation of the normal},
\begin{equation}
	\label{eq:tv_of_normal_discrete}
	\abs{\bn}_{DTV(\Gamma_h)}
	\coloneqq
	\sum_E d(\bn_E^+,\bn_E^-) \abs{E}.
\end{equation}
Here $E$ denotes an edge of length $\abs{E}$ between facets, and $d(\bn_E^+,\bn_E^-)$ is the geodesic distance between the two neighboring normal vectors.

We investigate \eqref{eq:tv_of_normal_discrete} in \Cref{sec:discrete_tv_of_normal}.
It turns out to coincide with the \emph{discrete total mean curvature} known in discrete differential geometry.
Subsequently, we discuss the utility of this functional as a prior in shape optimization problems cast in the form
\begin{equation}
	\label{eq:geometric_inverse_problem_with_TV_of_normal}
	\begin{aligned}
		& \text{Minimize} \quad \ell(u(\Omega_h),\Omega_h) + \beta \, \abs{\bn}_{DTV(\Gamma_h)} \\
		& \text{w.r.t.\ the vertex positions of the discrete shape $\Omega_h$ with boundary } \Gamma_h.
	\end{aligned}
\end{equation}
Here $u(\Omega_h)$ denotes the solution of the problem specific partial differential equation (PDE), which depends on the unknown domain $\Omega_h$.
Moreover, $\ell$ represents a loss function, such as a least squares function.
In particular, \eqref{eq:geometric_inverse_problem_with_TV_of_normal} includes geometric inverse problems, where one seeks to recover a \emph{shape} $\Omega_h \subset \R^3$ representing, e.g., the location of a source or inclusion inside a given, larger domain, or the geometry of an inclusion or a scatterer. 
Numerical experiments confirm that $\abs{\bn}_{DTV(\Gamma_h)}$, as a shape prior, can help to identify polyhedral shapes.

Similarly as for the case of smooth surfaces discussed in \cite{BergmannHerrmannHerzogSchmidtVidalNunez2019:1_preprint}, solving discrete shape optimization problems \eqref{eq:geometric_inverse_problem_with_TV_of_normal} is challenging due to the non-trivial dependency of $\bn$ on the vertex positions of the discrete surface~$\Gamma_h$, as well as the non-smoothness of $\abs{\bn}_{DTV(\Gamma_h)}$.
We therefore propose in \Cref{sec:discrete_split_Bregman} a version of the split Bregman method proposed in \cite{GoldsteinOsher2009}, an algorithm from the alternating direction method of multipliers (ADMM) class in which the jumps in the normal vector are treated as a separate variable.
The particularity here is that the normal vector has values in $\sphere{2}$ and thus the jump, termed $\bd$, is represented by a logarithmic map in the appropriate tangent space.
An outstanding feature of the proposed splitting is that the two subproblems, the minimization w.r.t.\ the vertex coordinates representing the discrete surface and w.r.t.\ $\bd$, are directly amenable to numerical algorithms.

Although many optimization algorithms have been recently generalized to Riemannian manifolds, see, e.g., \cite{Bacak2014,BergmannPerschSteidl2016,BergmannHerzogTenbrinckVidal-Nunez2019_preprint}, the Riemannian split Bregman method for manifolds proposed in this and the companion paper \cite{BergmannHerrmannHerzogSchmidtVidalNunez2019:1_preprint} is new to the best of our knowledge.
Its detailed investigation will be postponed to future work.
For a general overview of optimization on manifolds, we refer the reader to \cite{AbsilMahonySepulchre2008}.
We anticipate that our method can be applied to other non-smooth problems involving manifold-valued total variation functionals as well. 
Examples falling into this class have been introduced for instance in \cite{LellmannStrekalovskiyKoetterCremers2013,BergmannTenbrinck2018}.
An alternative splitting scheme, the so-called half-quadratic minimization, was introduced by~\cite{BergmannChanHielscherPerschSteidl2016}.

The structure of the paper is as follows.
In the following section we provide an analysis of the discrete total variation of the normal \eqref{eq:tv_of_normal_discrete} and its properties. 
We also compare it to geometric functionals appearing elsewhere in the literature.
In particular, we provide a numerical comparison between \eqref{eq:tv_of_normal_discrete} and surface regularization for a mesh denoising problem.
\Cref{sec:discrete_split_Bregman} is devoted to the formulation of an ADMM method which generalizes the split Bregman algorithm to the manifold-valued problem \eqref{eq:geometric_inverse_problem_with_TV_of_normal}.
In \cref{sec:implementation_details}, we describe an inclusion detection problem of type \eqref{eq:geometric_inverse_problem_with_TV_of_normal}, motivated by geophysical applications.
We also provide implementation details in the finite element framework \fenics.
Corresponding numerical results are presented in \Cref{sec:numerical_results}.

\section{Discrete Total Variation of the Normal}
\label{sec:discrete_tv_of_normal}

From this section onwards we assume that $\Gamma_h \subset \R^3$ is a piecewise flat, compact, orientable surface without boundary, which consists of a finite number of flat facets with straight sided edges between facets.
Consequently, $\Gamma_h$ can be thought of as a mesh consisting of polyhedral cells with a consistently oriented outer unit normal.
We also assume this mesh to be geometrically conforming, i.e., there are no hanging nodes.
A frequent situation is that $\Gamma_h$ is the boundary mesh of a geometrically conforming volume mesh with polyhedral cells, representing a volume domain $\Omega_h \subset \R^3$.
In our numerical example in \Cref{sec:numerical_results}, we will utilize a volume mesh consisting of tetrahedra, whose surface mesh consists of triangles; see \Cref{fig:volume_and_boundary_meshes}.

\begin{figure}[htp]
	\centering
	\includegraphics[width=0.40\textwidth]{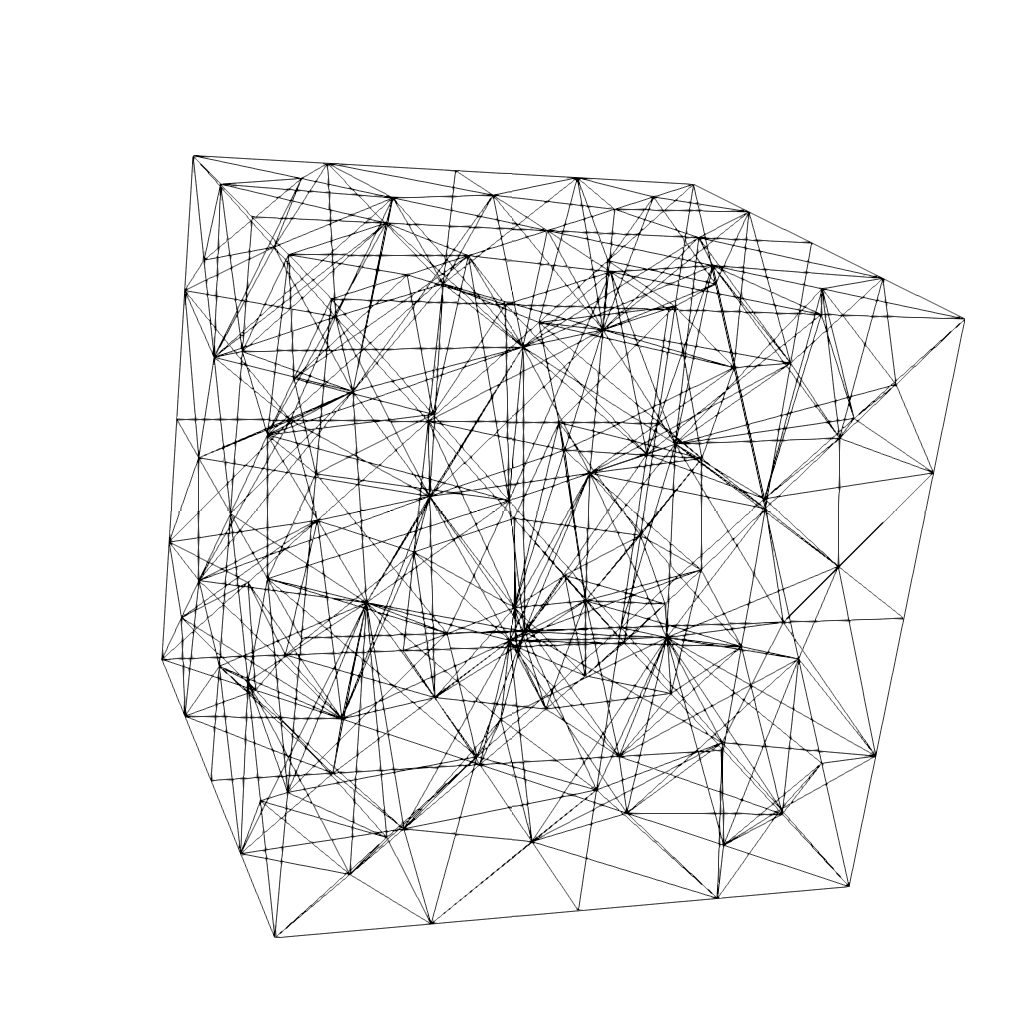}
	\includegraphics[width=0.40\textwidth]{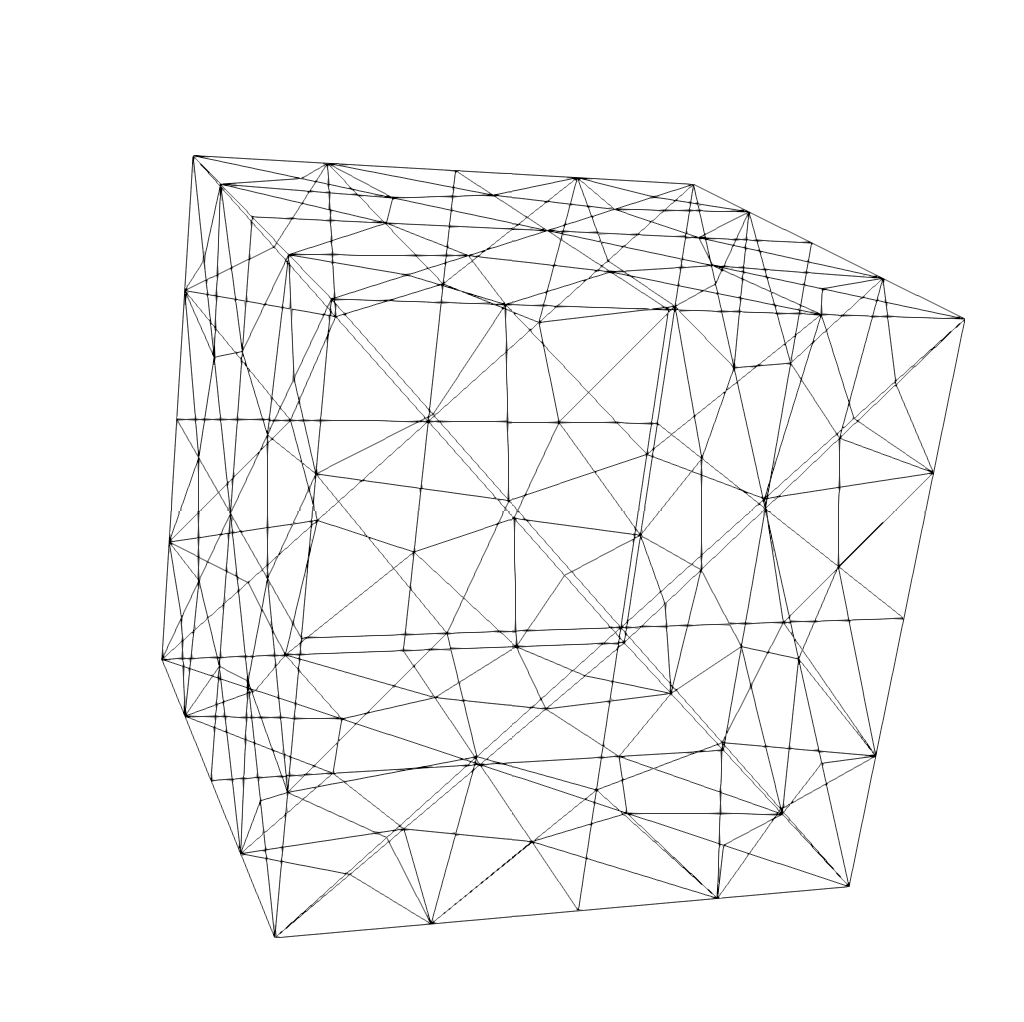}
	\caption{Volume mesh of a cube domain $\Omega_h$ consisting of tetrahedra (left) and corresponding triangular mesh of the boundary $\Gamma_h$ (right).}
	\label{fig:volume_and_boundary_meshes}
\end{figure}

Since the surface $\Gamma_h$ is non-smooth, the definition \eqref{eq:TV_of_normal} of the total variation of the normal proposed in the companion paper \cite{BergmannHerrmannHerzogSchmidtVidalNunez2019:1_preprint} for smooth surfaces does not apply.
Since the normal vector field $\bn$ is piecewise constant here, its variation is concentrated in spontaneous changes across edges between facets, rather than gradual changes expressed by the derivative $D_\Gamma \bn$.
We therefore propose to replace \eqref{eq:TV_of_normal} by 
\begin{equation}
	\label{eq:tv_of_normal_discrete_repeated}
	\abs{\bn}_{DTV(\Gamma_h)}
	\coloneqq
	\sum_E d(\bn_E^+,\bn_E^-) \abs{E}
	,
\end{equation}
where $E$ denotes an edge of Euclidean length $\abs{E}$ between facets.
Each edge has an arbitrary but fixed orientation, so that its two neighboring facets can be addressed as $F_E^+$ and $F_E^-$.
The normal vectors, constant on each facet, are denoted by $\bn_E^+$ and $\bn_E^-$.
Moreover, 
\begin{equation}
	\label{eq:geodesic_distance}
	d(\bn_E^+,\bn_E^-)
	=
	\arccos \bigh(){(\bn_E^+)^\top \bn_E^-}
	=
	\sphericalangle \bigh(){\bn_E^+, \bn_E^-}
\end{equation}
denotes the geodesic distance on $\sphere{2}$, i.e., the angle between the two unit vectors $\bn_E^+$ and $\bn_E^-$; see also \Cref{fig:CoNormal}.

To motivate the definition \eqref{eq:tv_of_normal_discrete_repeated}, consider a family of smooth approximations $\Gamma_\varepsilon$ of the piecewise flat surface $\Gamma_h$.
The approximations are supposed to be of class $C^2$ such that the flat facets are preserved up to a collar of order $\varepsilon$, and smoothing occurs in bands of width $2 \varepsilon$ around the edges.
Such an approximation can be constructed, for instance, by a level-set representation of $\Gamma_h$ by means of a signed distance function~$\Phi$.
Then a family of smooth approximations $\Gamma_\varepsilon$ can be obtained as zero level sets of mollifications $\Phi \circledast \varphi_\varepsilon$ for sufficiently small $\varepsilon$.
Here $\varphi_\varepsilon$ is the standard Friedrichs mollifier in 3D and $\circledast$ denotes convolution.
A construction of this type is used, for instance, in \cite{GomezHernandezLopez2005,BonitoDemlowNochetto2019_preprint}.
An alternative to this procedure is the so-called Steiner smoothing, where $\Gamma_\varepsilon$ is taken to be the boundary of the Minkowski sum of $\Omega_h$ with the ball $B_\varepsilon(0) \subset \R^3$; see for instance \cite[Section~4.4]{Sullivan2008}.

\begin{figure}[htp]
	\centering
	\includegraphics[width=0.45\textwidth]{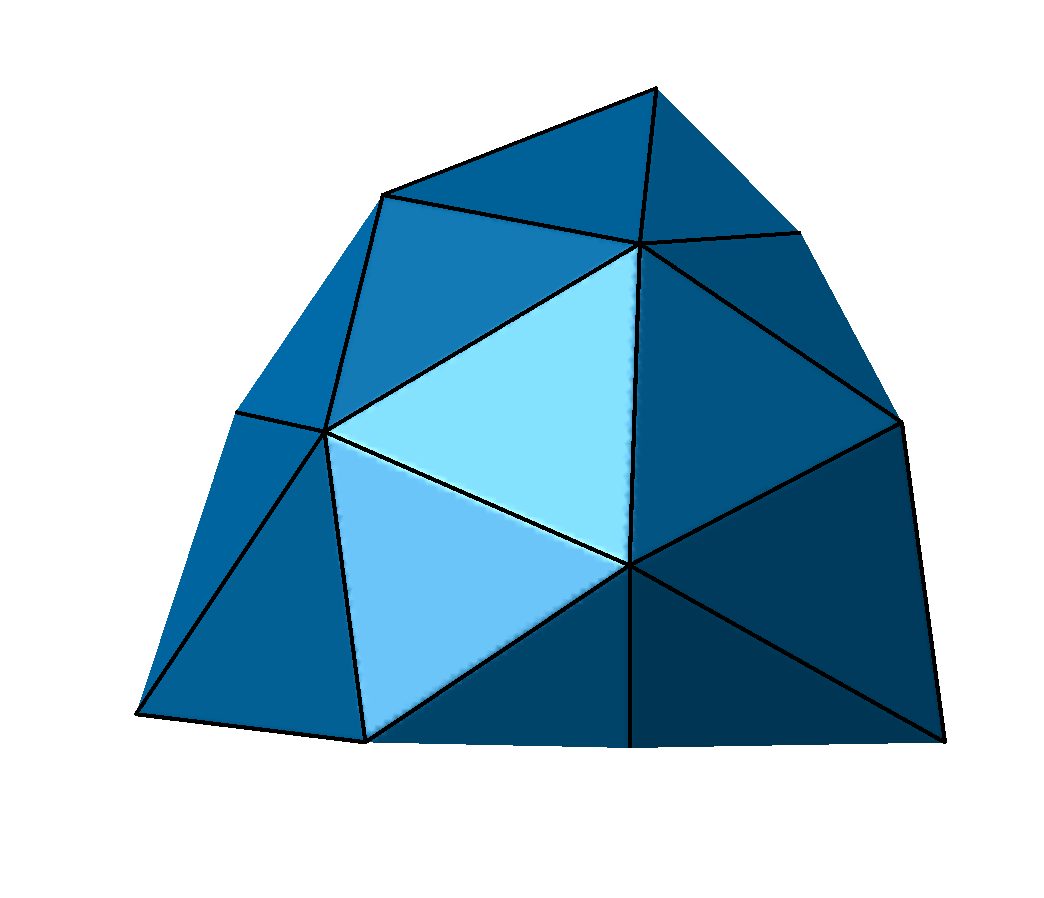}
	\hfill
	\includegraphics[width=0.45\textwidth]{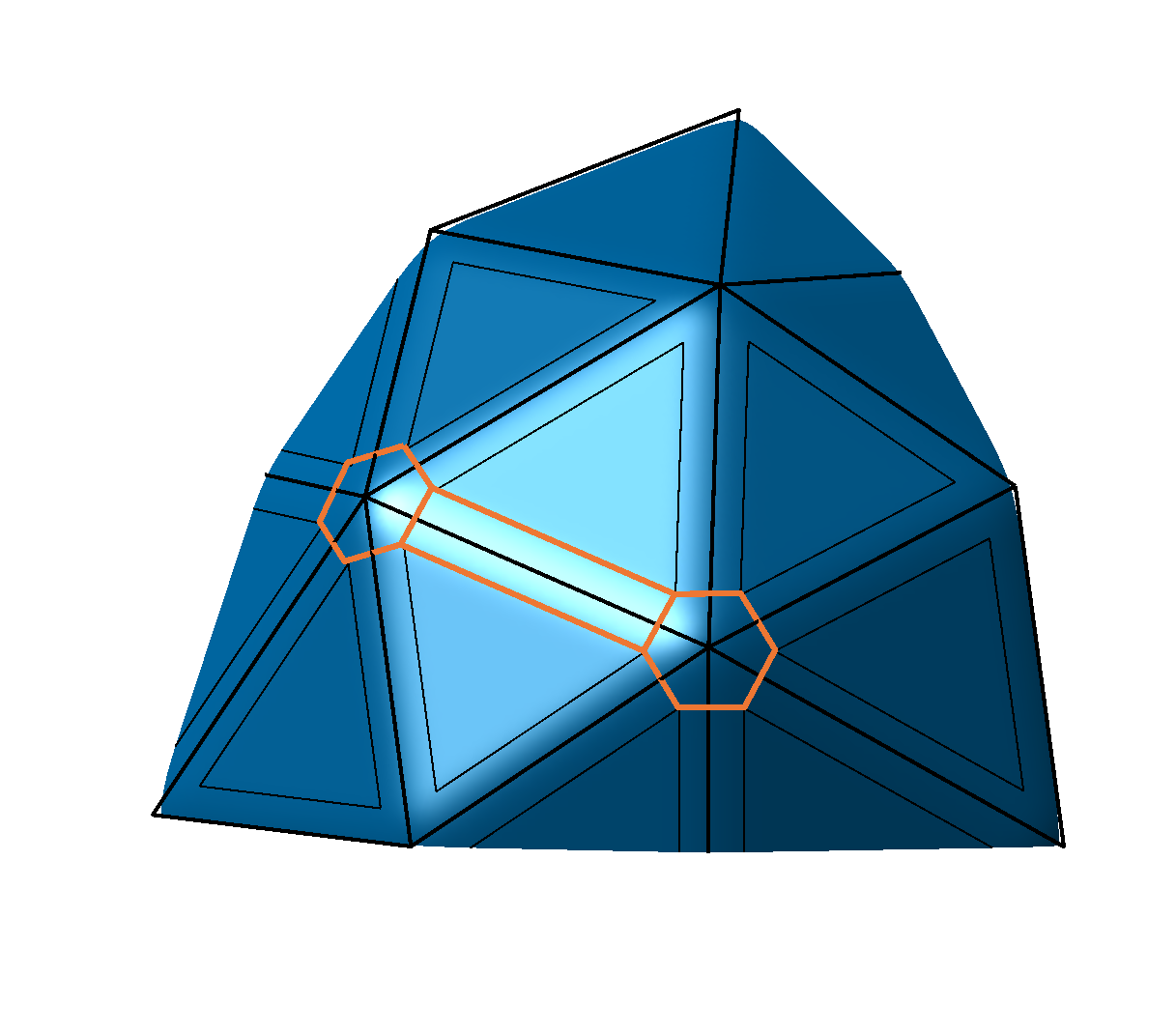}
	\caption{Illustration of the approximation of a portion of a triangulated surface $\Gamma_h$ (left) by a family of smooth surfaces $\Gamma_\varepsilon$ (right). Two vertex caps $B_{V,\varepsilon}$ and one transition region along an edge $I_{E,\varepsilon}$ are highlighted, see the proof of \Cref{theorem:approximation_by_mollification}.}
	\label{fig:approximation_by_smooth_surfaces}
\end{figure}

\begin{theorem}
	\label{theorem:approximation_by_mollification}
	Let $\{\Gamma_\varepsilon\}$ denote a family of smooth approximations of $\Gamma_h$ obtained by mollification, with normal vector fields $\bn_\varepsilon$.
	Then 
	\begin{equation}
		\label{eq:tv_of_normal_in_the_limit}
		\abs{\bn_\varepsilon}_{TV(\Gamma_\varepsilon)}
		\to
		\abs{\bn}_{DTV(\Gamma_h)}
		\quad \text{as } \varepsilon \searrow 0
		.
	\end{equation}
\end{theorem}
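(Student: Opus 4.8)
The plan is to start from the curvature representation of the total variation of the normal established in the companion paper, namely $\abs{\bn_\varepsilon}_{TV(\Gamma_\varepsilon)} = \int_{\Gamma_\varepsilon} (k_1^2 + k_2^2)^{1/2} \, \ds$. Since the flat facets are preserved up to a collar of order~$\varepsilon$, and both principal curvatures vanish there, the entire integral is concentrated in the smoothing region. I would decompose this region into two families of pieces: the transition tubes $I_{E,\varepsilon}$ running along the interiors of the edges~$E$, and the vertex caps $B_{V,\varepsilon}$ surrounding the vertices. The assertion \eqref{eq:tv_of_normal_in_the_limit} then reduces to showing that each edge tube contributes $d(\bn_E^+,\bn_E^-)\,\abs{E}$ in the limit, while the vertex caps contribute nothing.

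The geometric heart of the argument is the observation that, in the interior of an edge, the construction is translation invariant along the edge direction. Indeed, away from the endpoints the signed distance function $\Phi$ coincides locally with the distance to the infinite wedge formed by the two adjacent faces, which does not depend on the coordinate along the edge; since convolution with $\varphi_\varepsilon$ preserves this invariance, the zero level set $\Gamma_\varepsilon$ is locally a cylinder over a planar profile curve $\gamma_\varepsilon$ that rounds the corner. On such a cylinder one principal curvature vanishes and the other equals the curvature $\kappa_{\gamma_\varepsilon}$ of the profile, so $(k_1^2+k_2^2)^{1/2}$ reduces to $\abs{\kappa_{\gamma_\varepsilon}}$. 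A Fubini argument then factors the tube integral into the product of the length of the covered edge portion and the total absolute curvature $\int_{\gamma_\varepsilon} \abs{\kappa_{\gamma_\varepsilon}} \, \mathrm{d}\sigma$ of the profile. Because $\gamma_\varepsilon$ turns monotonically from the tangent direction of one face to that of the other, this total curvature equals the exterior angle of the wedge, which is exactly the angle between the two normals, i.e.\ $d(\bn_E^+,\bn_E^-)$, independently of~$\varepsilon$.

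It then remains to control the remainder, consisting of the vertex caps and the short end segments of the edge tubes lying within distance $O(\varepsilon)$ of the vertices, where the cylinder structure breaks down. Here I would not compute anything exactly, but merely bound the integrand: the mollified geometry has principal curvatures of order $1/\varepsilon$, while the area of each such piece is of order $\varepsilon^2$, so each contributes at most $O(\varepsilon)$, and the finitely many pieces together vanish as $\varepsilon \searrow 0$. Likewise, the length of the cylindrical portion of each edge tube differs from $\abs{E}$ by $O(\varepsilon)$. Summing over the finitely many edges and passing to the limit then yields \eqref{eq:tv_of_normal_in_the_limit}.

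I expect the main obstacle to be the rigorous justification of the uniform $O(1/\varepsilon)$ curvature bound on the mollified surface, both in the vertex caps, where the geometry is genuinely three-dimensional and cannot be reduced to a profile curve, and near the edge–vertex junctions, together with the verification that $\gamma_\varepsilon$ indeed turns monotonically, so that its total absolute curvature coincides with the turning angle rather than merely bounding it from below. The cylinder reduction and the Fubini computation on the edge interiors, by contrast, should be comparatively routine once the translation invariance of the mollification is in place.
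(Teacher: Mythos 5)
Your proposal is correct and follows essentially the same route as the paper: the same decomposition into edge transition tubes $I_{E,\varepsilon}$ and vertex caps $B_{V,\varepsilon}$, the same Fubini-type reduction of each tube to a profile integral yielding $d(\bn_E^+,\bn_E^-)\bigl[\abs{E}+\OO(\varepsilon)\bigr]$, and the same $\OO(\varepsilon^{-1})$-integrand-times-$\OO(\varepsilon^2)$-area bound on the vertex caps. Your use of the principal-curvature form $(k_1^2+k_2^2)^{1/2}$ is equivalent to the paper's choice of an edge-aligned orthonormal basis $\bxi_1,\bxi_2$ (which kills one of the two terms), and the caveats you flag at the end (uniform curvature bound, monotone turning of the profile) are exactly the points the paper's proof also leaves informal.
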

\begin{proof}
	Let us denote the vertices in $\Gamma_h$ by $V$ and its edges by $E$.
	Since mollification is local, the normal vector is constant in the interior of each facet minus its collar, which is of order $\varepsilon$. 
	Consequently, changes in the normal vector are confined to a neighborhood of the skeleton.
	We decompose this area into the disjoint union $\dot\bigcup_E I_{E,\varepsilon} \, \dot\cup \, \dot\bigcup_V B_{V,\varepsilon}$.
	Here $I_{E,\varepsilon}$ are the transition regions around edge $E$ where the normal vector is modified due to mollification, and $B_{V,\varepsilon}$ are the regions around vertex $V$.
	On $I_{E,\varepsilon}$, we can arrange the basis $\bxi_{1,2}$ to be aligned and orthogonal to $E$ so that 
	\begin{equation*}
		\int_{I_{E,\varepsilon}} \bigh(){\absRiemannian{(D_{\Gamma_\varepsilon} \bn_\varepsilon) \, \bxi_1}^2 + \absRiemannian{(D_{\Gamma_\varepsilon} \bn_\varepsilon) \, \bxi_2}^2}^{1/2} \, \ds
		=
		\int_{I_{E,\varepsilon}} \absRiemannian{(D_{\Gamma_\varepsilon} \bn_\varepsilon) \, \bxi_1} \, \ds
	\end{equation*}
	holds, which can be easily evaluated as an iterated integral.
	In each stripe in $I_{E,\varepsilon}$ perpendicular to $E$, $\bn_\varepsilon$ changes monotonically along the geodesic path between $\bn_E^+$ and $\bn_E^-$, so that the integral along this stripe yields the constant $d(\bn_E^+,\bn_E^-)$.
	Since the length of $I_{E,\varepsilon}$ parallel to $E$ is $\abs{E}$ up to terms of order $\varepsilon$, we obtain
	\begin{equation*}
		\int_{I_{E,\varepsilon}} \bigh(){\absRiemannian{(D_{\Gamma_\varepsilon} \bn_\varepsilon) \, \bxi_1}^2 + \absRiemannian{(D_{\Gamma_\varepsilon} \bn_\varepsilon) \, \bxi_2}^2}^{1/2} \, \ds
		=
		d(\bn_E^+,\bn_E^-) \bigh[]{ \abs{E} + \OO(\varepsilon) }
		.
	\end{equation*}
	The contributions to $\abs{\bn_\varepsilon}_{TV(\Gamma_\varepsilon)}$ from integration over $B_{V,\varepsilon}$ are of order $\varepsilon$ since $\bigh(){\absRiemannian{(D_{\Gamma_\varepsilon} \bn_\varepsilon) \, \bxi_1}^2 + \absRiemannian{(D_{\Gamma_\varepsilon} \bn_\varepsilon) \, \bxi_2}^2}^{1/2}$ is of order $\varepsilon^{-1}$ and the area of $B_{V,\varepsilon}$ is of order $\varepsilon^2$.
	This yields the claim.
\end{proof}

\subsection{Comparison with Prior Work for Discrete Surfaces}
\label{subsec:comparison_discrete_surfaces}

The functional \eqref{eq:tv_of_normal_discrete_repeated} has been used previously in the literature.
We mention that it fits into the framework of total variation of manifold-valued functions defined in \cite{GiaquintaMucci2007,LellmannStrekalovskiyKoetterCremers2013}.
Specifically in the context of discrete surfaces, we mention \cite{Sullivan2006} where the term $H_E \coloneqq \abs{E} \, \Theta_E$ appears as the \emph{total mean curvature} of the edge~$E$.
Here $\Theta_E$ is the exterior dihedral angle, which agrees with $d(\bn_E^+,\bn_E^-)$, see \eqref{eq:geodesic_distance}.
Consequently, \eqref{eq:tv_of_normal_discrete_repeated} can be written as $\sum_E H_E$.
Moreover, \eqref{eq:tv_of_normal_discrete_repeated} appears as a regularizer in \cite{WuZhengCaiFu2015} within a variational model for mesh denoising but the geodesic distances are approximated for the purpose of numerical solution.
We also mention the recent \cite{PellisKilianDellingerWallnerPottmann2019} where \eqref{eq:tv_of_normal_discrete_repeated} appears as a measure of visual smoothness of discrete surfaces.
Particular emphasis is given to the impact of the mesh connectivity.
In our study, the mesh connectivity will remain fixed and only triangular surface meshes are considered in the numerical experiments.

In addition, we are aware of \cite{ZhangWuZhangDeng2015,ZhongXieWangLiuLiu2018}, where
\begin{equation}
	\label{eq:tv_of_normal_discrete_Euclidean}
	\sum_E \abs{\bn_E^+-\bn_E^-}_2 \abs{E},
\end{equation}
was proposed in the context of variational mesh denoising.
Notice that in contrast to \eqref{eq:tv_of_normal_discrete_repeated}, \eqref{eq:tv_of_normal_discrete_Euclidean} utilizes the Euclidean as opposed to the geodesic distance between neighboring normals and is therefore an underestimator for \eqref{eq:tv_of_normal_discrete_repeated}.

Once again, we are not aware of any work in which \eqref{eq:tv_of_normal_discrete_repeated} or its continuous counterpart \eqref{eq:TV_of_normal} were used as a prior in shape optimization or geometric inverse problems involving partial differential equations.

\subsection{Properties of the Discrete Total Variation of the Normal}
\label{subsec:minimizers_discrete}

In this section we investigate some properties of the discrete total variation of the normal.
As can be seen directly from \eqref{eq:tv_of_normal_discrete_repeated}, a scaling in which $\Gamma_h$ is replaced by $\scale \Gamma_h$ for some $\scale > 0$ yields 
\begin{equation*}
	\abs{\bn_\scale}_{DTV(\scale \Gamma_h)} 
	= 
	\scale \, \abs{\bn}_{DTV(\Gamma_h)}
	.
\end{equation*}
This is the same behavior observed, e.g., for the total variation of scalar functions defined on two-dimensional domains.
Consequently, when studying optimization problems involving \eqref{eq:tv_of_normal_discrete_repeated}, we need to take precautions to avoid that $\Gamma_h$ degenerates to a point.
This can be achived either by imposing a constraint, e.g., on the surface area, or by considering tracking problems in which an additional loss term appears.

\subsubsection{Simple Minimizers of the Discrete Total Variation of the Normal}
\label{subsubsec:Minimizers_DTV_exact}

In this section, we investigate minimizers of $\abs{\bn}_{DTV(\Gamma_h)}$ subject to an area constraint.
More precisely, we consider the following problem.
Given a triangulated surface mesh consisting of vertices $V$, edges $E$ and facets $F$, find the mesh with the same connectivity, which 
\begin{equation}
	\label{eq:minimization_discrete}
	\text{minimizes} 
	\quad 
	\sum_E d(\bn_E^+,\bn_E^-) \abs{E} 
	\quad
	\text{subject to} 
	\quad 
	\sum_F \abs{F} = A_0
	.
\end{equation}
To the best of our knowledge, a precise characterization of the minimizers of~\eqref{eq:minimization_discrete} is an open problem and the solution depends on the connectivity; compare the observations in \cite[Section~4]{PellisKilianDellingerWallnerPottmann2019}.
That is, different triangulations of the same (initial) mesh, e.g., a cube, may yield different minimizers.
We also refer the reader to \cite{AlexaWardetzky2011} for a related observation in discrete mean curvature flow.

We do have, however, the following partial result.
For the proof, we exploit that \eqref{eq:tv_of_normal_discrete_repeated} coincides with the discrete total mean curvature and utilize results from discrete differential geometry.
The reader may wish to consult \cite{MeyerDesbrunSchroederBarr2003,Polthier2005,Wardetzky2006,BobenkoSpringborn2007,CraneDeGoesDesbrungSchroeder2013}.
\begin{theorem}
	\label{theorem:who_is_stationary_discrete}
	The icosahedron and the cube with crossed diagonals are stationary for \eqref{eq:minimization_discrete} within the class of triangulated surfaces $\Gamma_h$ of constant area and identical connectivity.
\end{theorem}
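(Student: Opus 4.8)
The plan is to work throughout with the identity $\abs{\bn}_{DTV(\Gamma_h)} = \sum_E \abs{E}\,\Theta_E =: W$ established via the comparison with the discrete total mean curvature, and to read ``stationary'' as the first-order optimality condition for \eqref{eq:minimization_discrete}: there is no infinitesimal vertex variation $\delta$ that is area-preserving ($A'(\Gamma_h;\delta)=0$, where $A=\sum_F\abs{F}$) and strictly decreases $W$ to first order. Since $W$ is non-smooth exactly where flat edges occur ($\Theta_E=0$), I would phrase this through the one-sided directional derivative $W'(\Gamma_h;\,\cdot\,)$ rather than a plain gradient. The key structural observation is that $W'(\Gamma_h;\,\cdot\,)$ is \emph{convex}: the non-flat edges contribute a linear term $\langle g_{\mathrm{sm}},\delta\rangle$, while near a flat edge $\Theta_E=\arccos\bigh(){(\bn_E^+)^\top\bn_E^-}\approx\absRiemannian{\bn_E^+-\bn_E^-}$ depends to leading order like a Euclidean norm of a linear image of $\delta$, hence contributes a convex seminorm. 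The second ingredient is symmetry: both $W$ and $A$ are invariant under the geometric automorphism group $G$ of the mesh ($G=I_h$ for the icosahedron, $G=O_h$ for the cube with crossed diagonals), acting linearly and isometrically on the configuration space $\R^{3\abs{V}}$ by permuting vertices and rotating/reflecting $\R^3$, with the reference configuration fixed.

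The heart of the argument is a non-smooth version of the principle of symmetric criticality, which I would obtain directly from the two ingredients above. Because $\Gamma_h$ is $G$-invariant and $W$ is $G$-invariant, one has $W'(\Gamma_h;g\delta)=W'(\Gamma_h;\delta)$ for every $g\in G$; averaging over $G$ and using Jensen's inequality for the convex function $W'(\Gamma_h;\,\cdot\,)$ gives
\[
W'(\Gamma_h;\delta)=\tfrac{1}{\abs{G}}\sum_{g\in G}W'(\Gamma_h;g\delta)\ \ge\ W'\bigh(){\Gamma_h;\tfrac{1}{\abs{G}}\sum_{g\in G}g\delta}=W'(\Gamma_h;\delta_{\mathrm{inv}}),
\]
where $\delta_{\mathrm{inv}}=P\delta$ is the Reynolds projection of $\delta$ onto the fixed space $(\R^{3\abs{V}})^G$, i.e.\ onto the tangent space of the submanifold $\Sigma$ of $G$-invariant configurations. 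Since $\nabla A$ is $G$-invariant, $A'(\Gamma_h;\delta)=A'(\Gamma_h;\delta_{\mathrm{inv}})$, so $\delta$ is area-preserving iff $\delta_{\mathrm{inv}}$ is. Consequently the infimum of $W'(\Gamma_h;\,\cdot\,)$ over feasible variations is attained on invariant ones, and the whole question reduces to criticality of $W$ restricted to $\{A=A_0\}\cap\Sigma$.

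For the icosahedron the reduction finishes the proof at once. It has no flat edges, so $W$ is smooth there, and $\Sigma$ is one-dimensional (the family of scaled regular icosahedra). The area constraint cuts $\Sigma$ down to a single point, so there is no nonzero feasible invariant variation and $W'(\Gamma_h;\delta)\ge W'(\Gamma_h;\delta_{\mathrm{inv}})=\langle\nabla W,\delta_{\mathrm{inv}}\rangle=0$ for every feasible $\delta$; as $W$ is smooth this is exactly $\nabla W\in\mathrm{span}\,\nabla A$. I would additionally record the purely local check that the vertex stabilizer $C_{5v}$ fixes only the radial direction, so by vertex-transitivity $\nabla W$ and $\nabla A$ are radial of equal magnitude at every vertex and hence proportional.

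The cube with crossed diagonals is the substantive case and is where I expect the genuine work. Here $\Sigma$ is two-dimensional, parametrized by the corner radius and face-center radius $(R_c,R_f)$, and the $24$ diagonal edges (one $G$-orbit of equal length) are flat at the reference mesh. I would first verify transversality of the flatness locus $R_f=\phi(R_c)$ and the area locus $\{A=A_0\}$ inside $\Sigma$; they meet only at the reference configuration, so the (essentially unique) feasible invariant direction leaves the flatness locus and creases the diagonal edges. Along this direction, parametrized by $t$,
\[
W(t)=W_0+\alpha\,t+\gamma\,\abs{t}+\OO(t^2),\qquad \gamma>0,
\]
with the kink $\gamma\abs{t}$ coming from the creasing diagonal edges and the smooth slope $\alpha$ from the $\tfrac{\pi}{2}$-weighted cube-edge contribution $\tfrac{\pi}{2}\sum_{\text{cube }E}\abs{E}$ as shallow pyramids rise over the faces. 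Stationarity is then equivalent to $0\in\partial W(0)$, i.e.\ to the inequality $\abs{\alpha}\le\gamma$. The main obstacle is precisely to establish this inequality: it requires the explicit first-order derivatives of the cube-edge dihedral angles under a coupled corner/face-center motion (to get $\alpha$) and the aggregate linearized crease rate of the $24$ flat diagonal edges (to get $\gamma$) for the octahedral geometry, and then a careful comparison of the two. Everything else in the proof is structural; this dihedral-angle computation and the verification of $\abs{\alpha}\le\gamma$ is the crux.
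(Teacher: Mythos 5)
Your overall strategy is a legitimate and genuinely different route from the paper's: the paper works directly with the Lagrangian \eqref{eq:minimization_discrete_Lagrangian} and the explicit discrete gradient formula \eqref{eq:minimization_discrete_Lagrangian_partial_gradient} (dihedral angles plus the cotangent formula for the area gradient), then verifies by direct substitution that $\bnull$ lies in the generalized differential for a suitable multiplier $\mu$; you instead invoke a non-smooth symmetric-criticality principle to reduce to $G$-invariant variations. For the icosahedron your argument is complete and arguably cleaner than the paper's: the fixed space of $I_h$ is one-dimensional (radial), the area constraint annihilates it, and smoothness of $W$ there lets you conclude $\nabla W \in \mathrm{span}\,\nabla A$ without computing anything. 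The Jensen/Reynolds reduction itself is sound, since the one-sided directional derivative is a sum of a linear functional (non-flat edges) and norms of linear maps (flat edges), hence convex and positively homogeneous.

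The cube with crossed diagonals, however, is the substantive half of the theorem, and there your proof has a genuine gap: you reduce stationarity to the inequality $\abs{\alpha}\le\gamma$ along the unique area-preserving invariant direction and then explicitly stop, declaring this "the crux." Without that verification nothing is proved for the cube. Two further points. First, your description of $\alpha$ as coming only from the variation of $\tfrac{\pi}{2}\sum_{\text{cube }E}\abs{E}$ is incomplete: under the invariant perturbation the face centers leave the face planes, which tilts the triangles adjacent to each cube edge and changes those dihedral angles to first order, so $\alpha$ also contains $\sum_{\text{cube }E}\abs{E}\,\delta\Theta_E$; omitting this term could change the sign or size of $\alpha$. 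Second, the paper's computation shows something stronger than $\abs{\alpha}\le\gamma$, namely that $\alpha=0$: at face-center vertices every term in \eqref{eq:minimization_discrete_Lagrangian_partial_gradient} vanishes by symmetry regardless of $\mu$ (since $\sum_{j\in\NN(i)}(\bx_i-\bx_j)=\bnull$ and all incident dihedral angles are zero), while at corner vertices the cotangents of the cube-edge angles vanish ($\alpha_{ij}=\beta_{ij}=\pi/2$) and the single remaining scalar equation is solved by one choice of $\mu$; hence the entire smooth part of $\nabla\LL$ can be annihilated, and restricting to an area-preserving direction forces the smooth slope to be zero, so the kink term $\gamma\abs{t}\ge 0$ closes the argument for free. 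To repair your proof you must carry out exactly this kind of explicit first-order computation of the cube-edge dihedral-angle and edge-length variations (and of the area variation to eliminate $\mu$); the structural symmetry reduction, while elegant, does not substitute for it.
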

\begin{proof}
	Let us consider the Lagrangian associated with \eqref{eq:minimization_discrete}, 
	\begin{equation}
		\label{eq:minimization_discrete_Lagrangian}
		\LL(\bx_1,\ldots,\bx_{\nvertices},\mu) 
		\coloneqq
		\sum_E d(\bn_E^+,\bn_E^-) \abs{E} 
		+ 
		\mu \, \Big( \sum_F \abs{F} - A_0 \Big)
		.
	\end{equation}
	Here $\bx_i \in \R^3$ denote the coordinates of vertex~$\#i$ and $\nvertices$ is the total number of vertices of the triangular surface mesh.
	Notice that the normal vectors $\bn_E^\pm$, edge lengths $\abs{E}$ and facet areas $\abs{F}$ depend on these coordinates.
	The gradient of \eqref{eq:minimization_discrete_Lagrangian} w.r.t.\ $\bx_i$ can be represented as
	\begin{multline}
		\label{eq:minimization_discrete_Lagrangian_partial_gradient}
		\nabla_{\bx_i} \LL(\bx_1,\ldots,\bx_{\nvertices},\mu) 
		=
		\sum_{j \in \NN(i)} \Big[ \frac{d(\bn_{E_{ij}}^+,\bn_{E_{ij}}^-)}{\abs{E_{ij}}} + \frac{\mu}{2} \big( \cot \alpha_{ij} + \cot \beta_{ij} \big) \Big] (\bx_i - \bx_j) 
		,
	\end{multline}
	see for instance \cite{CraneDeGoesDesbrungSchroeder2013}.
	Here $\NN(i)$ denotes the index set of vertices adjacent to vertex~$\#i$.
	For any $j \in \NN(i)$, $E_{ij}$ denotes the edge between vertices~$\#i$ and $\#j$.
	Moreover, $\alpha_{ij}$ and $\beta_{ij}$ are the angles as illustrated in \Cref{fig:angles}.

	For the icosahedron with surface area $A_0$, all edges have length $\abs{E_{ij}} = \big( \frac{A_0}{5 \, \sqrt{3}} \big)^{1/2}$.
	Moreover, since all facets are unilaterial triangles, $\alpha_{ij} = \beta_{ij} = \pi/3$ holds.
	Finally, the exterior dihedral angles $d(\bn_{E_{ij}}^+,\bn_{E_{ij}}^-)$ are all equal to $\arccos(\sqrt{5}/3) \approx 41.81^\circ$.
	Consequently, the Lagrangian is stationary for the Lagrange multiplier $\mu = - \sqrt{3} \arccos(\sqrt{5}/3) \big( \frac{5 \, \sqrt{3}}{A_0} \big)^{1/2}$.

	We remark that \eqref{eq:tv_of_normal_discrete_repeated} and thus \eqref{eq:minimization_discrete_Lagrangian_partial_gradient} is not differentiable when one or more of the angles $d(\bn_{E_{ij}}^+,\bn_{E_{ij}}^-)$ are zero.
	This is the case for the cube with crossed diagonals, see \Cref{fig:angles}.
	However, the right hand side in \eqref{eq:minimization_discrete_Lagrangian_partial_gradient} still provides a generalized derivative of $\LL$ in the sense of Clarke.
	\\
	In contrast to the icosahedron, the cube has two types of vertices.
	When $\bx_i$ is the center vertex of one of the lateral surfaces, then $d(\bn_{E_{ij}}^+,\bn_{E_{ij}}^-) = 0$ and $\alpha_{ij} = \beta_{ij} = \pi/4$ for all $j \in \NN(i)$.
	Moreover, since $\sum_{j \in \NN(i)} (\bx_i - \bx_j) = \bnull$ holds, $\bnull$ is an element of the generalized (partial) differential of $\LL$ at $(\bx_1,\ldots,\bx_{\nvertices},\mu)$ w.r.t.\ $\bx_i$, independently of the value of the Lagrange multiplier $\mu$.
	Now when $\bx_i$ is a vertex of \eqq{corner type}, we need to distinguish two types of edges.
	Along the three edges leading to neighbors of the same type, we have a exterior dihedral angle of $d(\bn_{E_{ij}}^+,\bn_{E_{ij}}^-) = \pi/2$, length $\abs{E_{ij}} = (A_0/6)^{1/2}$ and $\alpha_{ij} = \beta_{ij} = \pi/2$.
	Along the three remaining edges leading to surface centers, we have $d(\bn_{E_{ij}}^+,\bn_{E_{ij}}^-) = 0$ and $\alpha_{ij} = \beta_{ij} = \pi/4$.
	Thus for vertices of \eqq{corner type}, it is straightforward to verify that $\bnull$ belongs to the generalized (partial) differential of $\LL$ at $(\bx_1,\ldots,\bx_{\nvertices},\mu)$ w.r.t.\ $\bx_i$ if 
	\begin{equation*}
		\left( \frac{\pi \, \sqrt{2}/2}{(A_0/6)^{1/2}} + 2 \, \mu \right) 
		\begin{pmatrix}
			1 \\ 1 \\ 1
		\end{pmatrix}
		= 
		\bnull
	\end{equation*}
	holds, which is true for the obvious choice of $\mu$.
\end{proof}
Numerical experiments indicate that the icosahedron as well as the cube are not only stationary points, but also local minimizers of \eqref{eq:minimization_discrete}.
We can thus conclude that the discrete objective \eqref{eq:tv_of_normal_discrete_repeated} exhibits different minimizers than its continuous counterpart \eqref{eq:TV_of_normal} for smooth surfaces.
In particular, \eqref{eq:tv_of_normal_discrete_repeated} admits and promotes piecewise flat minimizers such as the cube.
This is in accordance with observations made in \cite[Section~3.2]{PellisKilianDellingerWallnerPottmann2019} that optimal meshes typically exhibit a number of zero dihedral angles.
This property sets our functional apart from other functionals previously used as priors in shape optimization and geometric inverse problems.
For instance, the popular surface area prior is well known to produce smooth shapes; see the numerical experiments in \cref{subsubsec:DTV_vs_area} below.
\begin{figure}[h]
	\centering
	\includegraphics[width=0.3\textwidth]{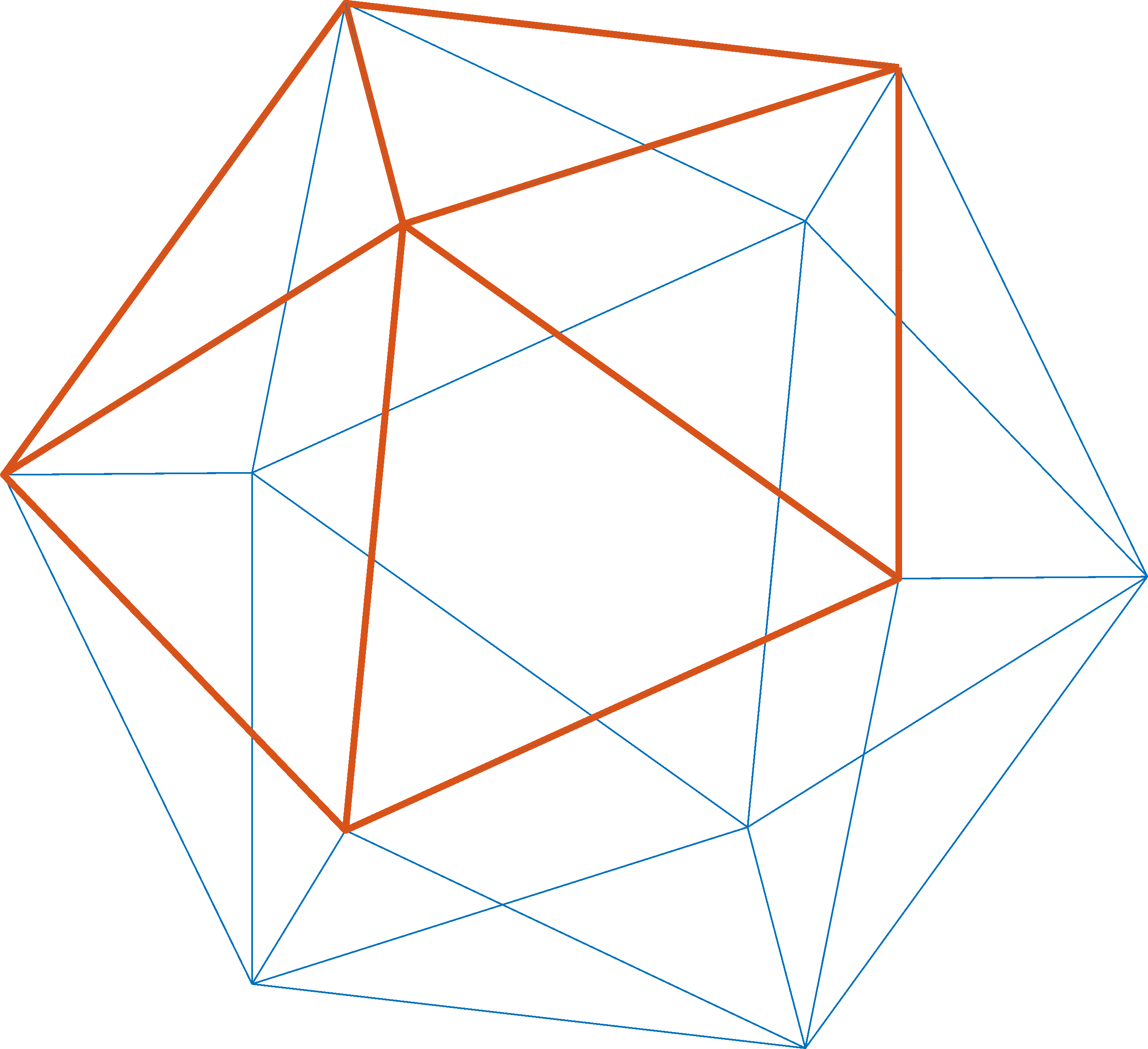}
	\hfill
	\includegraphics[width=0.3\textwidth]{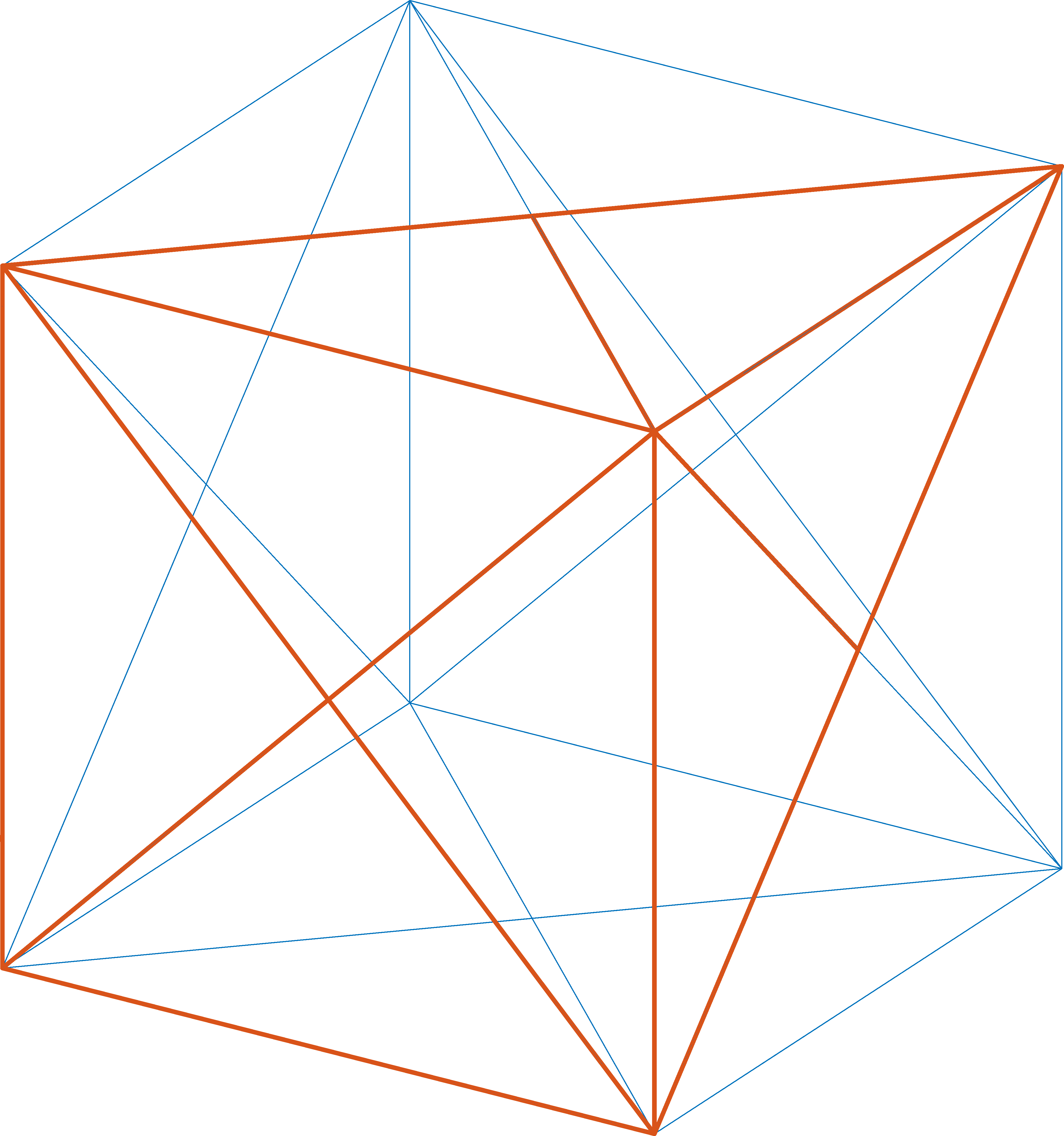}
	\hfill
	\begin{overpic}[width=0.3\textwidth]{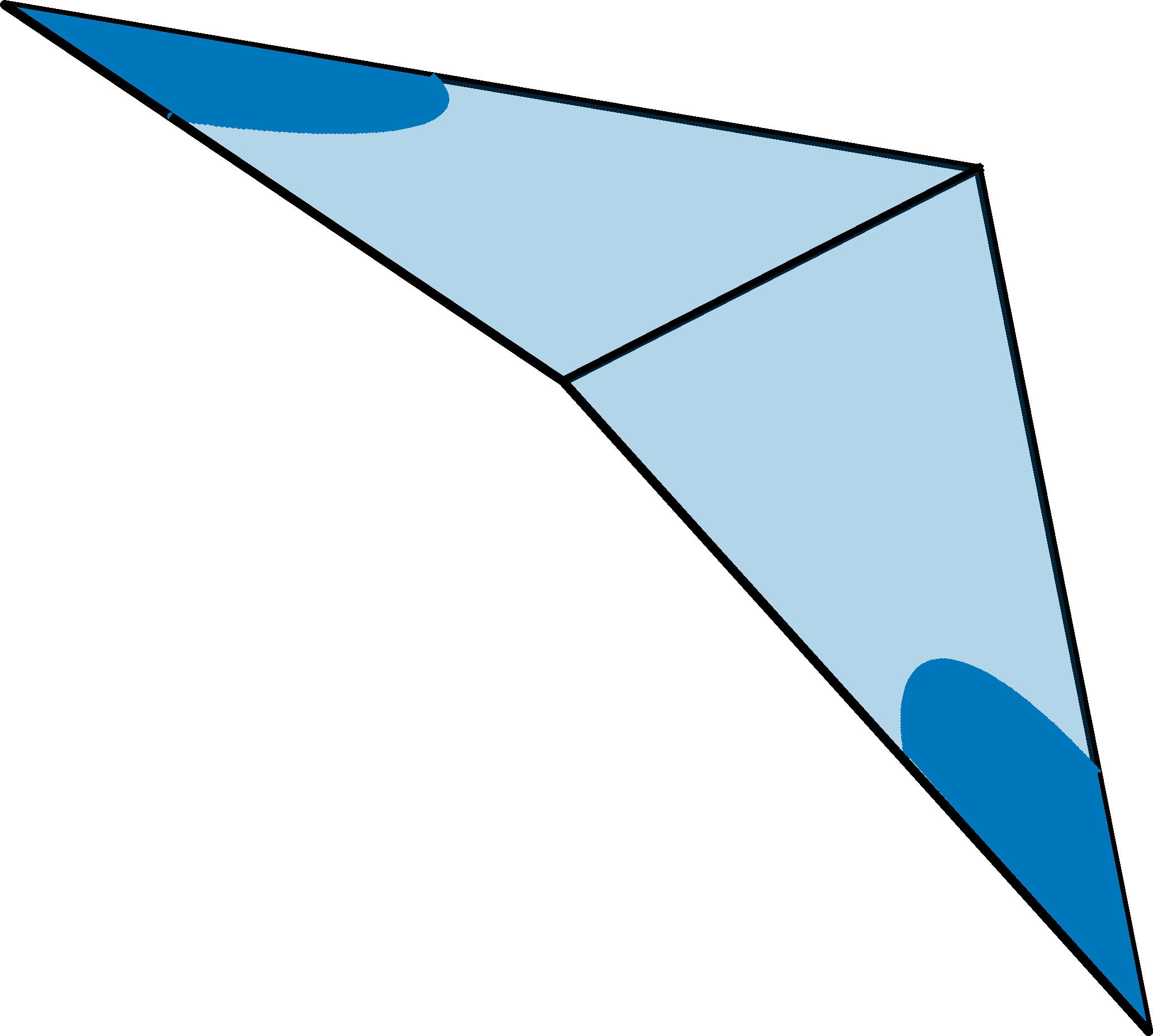}
		\put(14,82){$\alpha_{ij}$}
		\put(81,23){$\beta_{ij}$}
		\put(60,52){\rotatebox{27}{$E_{ij}$}}
	\end{overpic}
	\caption{The icosahedron and the cube with crossed diagonals, two stationary surfaces for \eqref{eq:minimization_discrete}. The highlighted regions as well as the figure on the right illustrate the proof of \Cref{theorem:who_is_stationary_discrete}.}
	\label{fig:angles}
\end{figure}

\begin{table}
	\centering
	\begin{tabular}{llll}
		\toprule
		& unit cube               & tetrahedron       & icosahedron       \\
		\midrule
		edge length $\abs{E}$     & 1                       & $\approx 1.8612$  & $\approx 0.8324$  \\
		number of edges           & 12                      & 6                 & 30                \\
		exterior dihedral angle   & $\pi/2$                 & $\approx 1.9106$  & $\approx 0.7297$  \\
		\midrule
		$\abs{\bn}_{DTV(\Gamma_h)}$ & $6 \pi \approx 18.8496$ & $\approx 21.3365$ & $\approx 18.2218$ \\
		\bottomrule \\
	\end{tabular}
	\caption{Values of the discrete total variation of the normal functional \eqref{eq:tv_of_normal_discrete_repeated} for the cube with edge length~1, as well as the regular tetrahedron and the icosahedron with the same surface area as the cube.}
	\label{tab:selected_DTV_values}
\end{table}

\subsubsection{Comparison of Discrete and Continuous Total Variation of the Normal}
\label{subsubsec:}

In this section we compare the values of \eqref{eq:TV_of_normal} and \eqref{eq:tv_of_normal_discrete_repeated} for a sphere~$\Gamma$, and a sequence of discretized spheres~$\Gamma_h$.
For comparison, we choose $\Gamma$ to have the same surface area as the cube in the previous section, i.e., we use $r = \sqrt{3/(2\pi)}$ as the radius.
It is easy to see that since the principal curvatures of a sphere~$\Gamma$ of radius~$r$ are $k_1 = k_2 = 1/r$, \eqref{eq:TV_of_normal} becomes
\begin{equation*}
	\abs{\bn}_{TV(\Gamma)}
	=
	\int_\Gamma \bigh(){k_1^2 + k_2^2}^{1/2} \, \ds
	=
	4 \, \pi \, r^2 \frac{\sqrt{2}}{r}
	=
	4 \, \sqrt{2} \, \pi \, r
	,
\end{equation*}
which amounts to $4 \sqrt{3 \pi} \approx 12.2799$ for the sphere under consideration.

To compare this to the discrete total variation of the normal, we created a sequence of triangular meshes~$\Gamma_h$ of this sphere with various resolutions using \gmsh\ and evaluated \eqref{eq:tv_of_normal_discrete_repeated} numerically.
The results are shown in \Cref{tab:discrete_objective_sphere}.
They reveal a factor of approximately $\sqrt{2}$ between the discrete and continuous functionals for the sphere.
To explain this discrepancy, recall that the principal curvatures of the sphere are $k_1 = k_2 = 1/r$.
This implies that the derivative map $D_\Gamma \bn$ has rank two everywhere.
Discretized surfaces behave fundamentally different in the following respect.
Their curvature is concentrated on the edges, and one of the principal curvatures (the one in the direction along the edge) is always zero.
So even for successively refined meshes, e.g., of the sphere, one is still measuring only one principal curvature at a time.
We are thus led to the conjecture that the limit of \eqref{eq:tv_of_normal_discrete_repeated} for sucessively refined meshes is the \eqq{anisotropic}, yet still intrinsic measure $\int_\Gamma \abs{k_1} + \abs{k_2} \, \ds$, whose value for the sphere in \Cref{tab:selected_DTV_values} is $4 \sqrt{6 \pi} \approx 17.3664$.
The factor $\sqrt{2}$ can thus be attributed to the ratio between the $\ell_1$- and $\ell_2$-norms of the vector $(1,1)^\top$.
This observation is in accordance with the findings in \cite[Section~1.2]{PellisKilianDellingerWallnerPottmann2019}.

One could consider an \eqq{isotropic} version of \eqref{eq:tv_of_normal_discrete_repeated} in which the dihedral angles across all edges meeting at any given vertex are measured jointly.
These alternatives will be considered elsewhere.

\begin{table}[tbp]
	\centering
	\begin{tabular}{@{}rrrrr@{}}
		\toprule
		$\nvertices$ & $\nedges$ & $\ntriangles$ & $\abs{\bn}_{DTV(\Gamma_h)}$ & $\abs{\bn}_{DTV(\Gamma_h)}/\abs{\bn}_{TV(\Gamma_h)}$ \\
		\midrule
		54& 156& 104& 17.01045& 1.38522\\
		270& 804& 536& 17.47614& 1.42315\\
		871& 2,607& 1,738& 17.34861& 1.41276\\
		1,812& 5,430& 3,620& 17.35852& 1.41357\\
		3,314& 9,936& 6,624& 17.36350& 1.41398\\\midrule
		9,530& 28,584& 19,056& 17.36855& 1.41439\\
		82,665& 247,989& 165,326& 17.37524& 1.41493\\
		101,935& 305,799& 203,866& 17.37341& 1.41478\\
		335,216& 1,005,642& 670,428& 17.37389& 1.41482\\
		958,022& 2,874,060& 1,916,040& 17.37410& 1.41484\\
		\bottomrule \\
	\end{tabular}
	\caption{Various triangulations $\Gamma_h$ of a sphere~$\Gamma$ with radius $r = \sqrt{3/(2\pi)}$, their values of \eqref{eq:tv_of_normal_discrete_repeated} and the ratio between \eqref{eq:tv_of_normal_discrete_repeated} and \eqref{eq:TV_of_normal}. The value of the latter is $\abs{\bn}_{TV(\Gamma)} = 4 \sqrt{3 \pi} \approx 12.2799$. $\nvertices$, $\nedges$ and $\ntriangles$ denote the number of vertices, edges, and triangles of the respective mesh.}
	\label{tab:discrete_objective_sphere}
\end{table}

\subsubsection{Discrete Total Variation Compared to Surface Area Regularization}
\label{subsubsec:DTV_vs_area}

In this section we consider a specific instance of the general problem \eqref{eq:geometric_inverse_problem_with_TV_of_normal} and compare our discrete TV functional with the surface area regularizer.
We begin with a triangular surface mesh $\Gamma_h$ of a box $\Omega = (-1,1) \times (-1.5,1.5) \times (-2, 2)$ and add normally distributed noise to the coordinate vector of each vertex in average normal direction of the adjacent triangles with zero mean and standard deviation $\sigma = 0.2$ times the average edge length. 
We denote the noisy vertex positions as $\widetilde \bx_V$ and utilize a simple least-squares functional as our loss function and consider the following mesh denoising problem,
\begin{equation}
	\label{eq:surface_recovery_DTV}
	\begin{aligned}
		& \text{Minimize} \quad \frac{1}{2} \sum_V \abs{\bx_V - \widetilde \bx_V}_2^2 + \beta \, \abs{\bn}_{DTV(\Gamma_h)} \\
		& \text{w.r.t.\ the vertex positions $\bx_V$ of the discrete surface $\Gamma_h$}.
	\end{aligned}
\end{equation}
Here the sum runs over the vertices of~$\Gamma_h$.
For comparison, we also consider a variant
\begin{equation}
	\label{eq:surface_recovery_area}
	\begin{aligned}
		& \text{Minimize} \quad \frac{1}{2} \sum_V \abs{\bx_V - \widetilde \bx_V}_2^2 + \gamma \, \sum_F \abs{F} \\
		& \text{w.r.t.\ the vertex positions $\bx_V$ of the discrete surface $\Gamma_h$},
	\end{aligned}
\end{equation}
where we utilize the total surface area as prior.

A numerical approach to solve the non-smooth problem \eqref{eq:surface_recovery_DTV} will be discussed in \cref{sec:discrete_split_Bregman}.
By contrast, problem \eqref{eq:surface_recovery_area} is a fairly standard smooth discrete shape optimization problem and we solve it using a simple shape gradient descent scheme. 
The details how to obtain the shape derivative and shape gradient are the same as described in \cref{subsec:DiscreteShapeDerivative} for problem \eqref{eq:surface_recovery_DTV}.

\Cref{fig:comparison_TV_surface_area} shows the numerical solutions of \eqref{eq:surface_recovery_DTV} and \eqref{eq:surface_recovery_area} for various choices of the regularization parameters $\beta$ and $\gamma$, respectively. 
The initial guess for both problems is a sphere with the same connectivity as $\Gamma_h$.
We can clearly see that our functional \eqref{eq:tv_of_normal_discrete_repeated} achieves a very good reconstruction of the original shape for a proper choice of $\beta$.
By contrast, the surface area regularization requires a relatively large choice of $\gamma$ in order to reasonably reduce the noise, which in turn leads to a significant shrinkage of the surface and a rounding of the sharp features.
\begin{figure}[h!]
	\centering
	\includegraphics[width=0.3\textwidth]{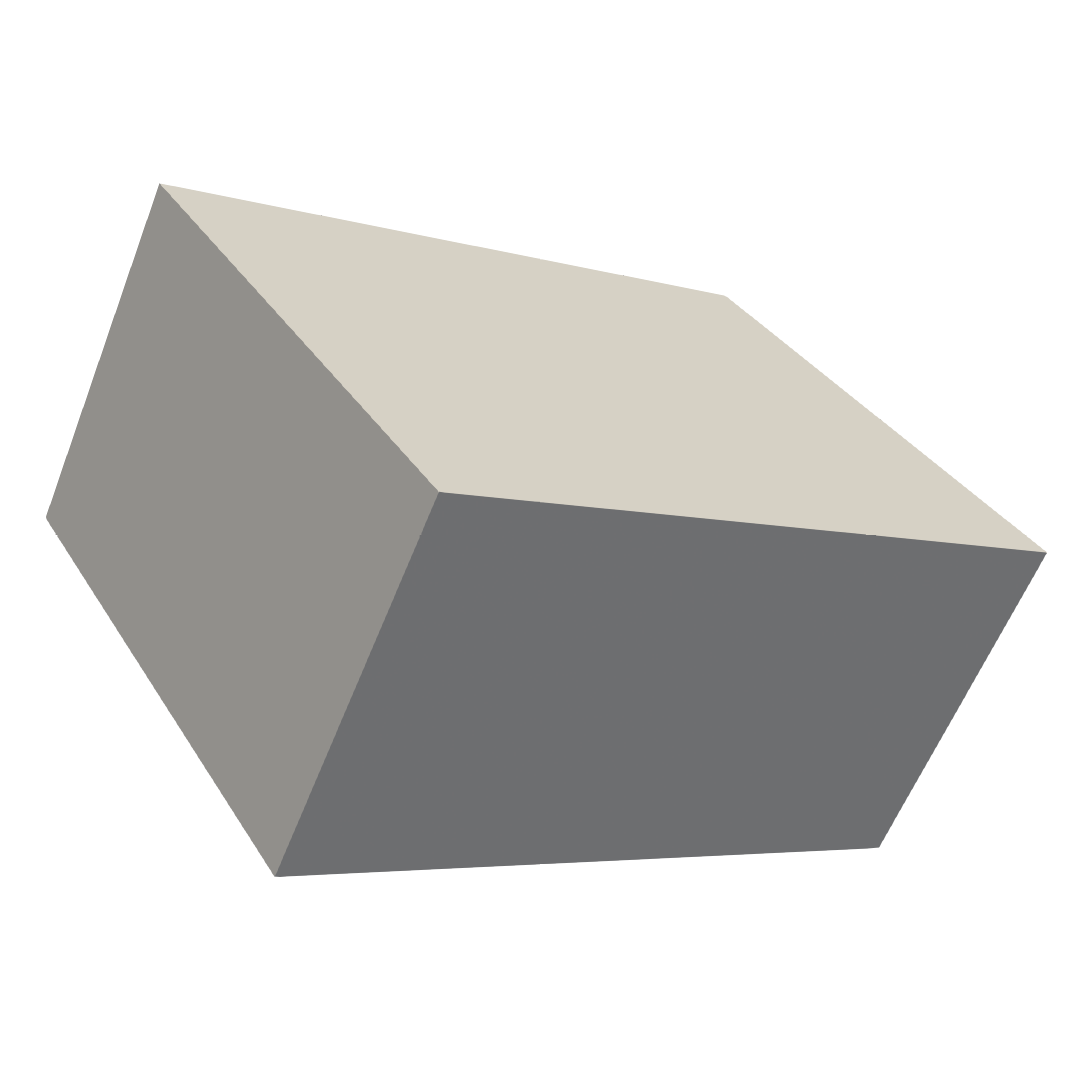}
	\includegraphics[width=0.3\textwidth]{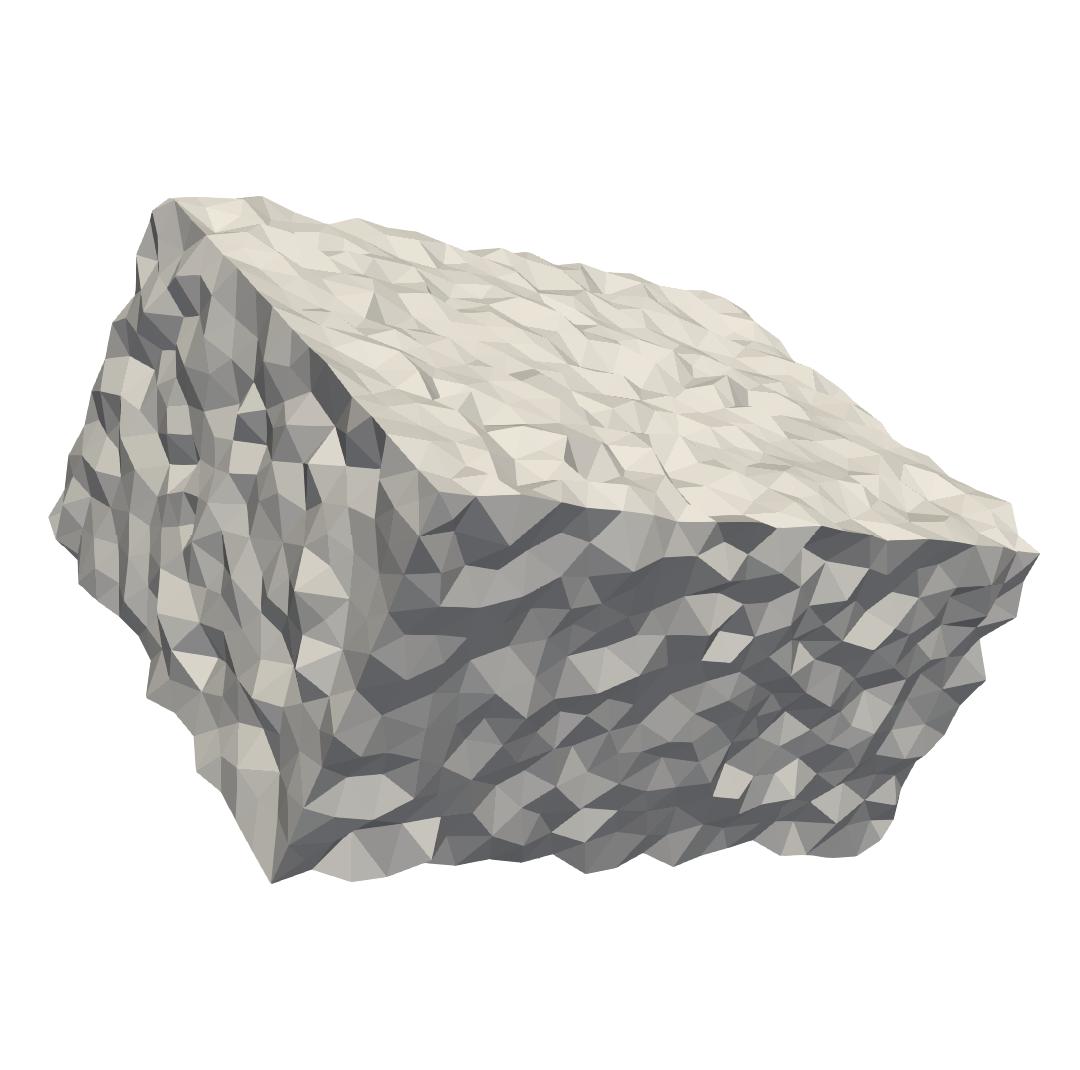}
	\includegraphics[width=0.3\textwidth]{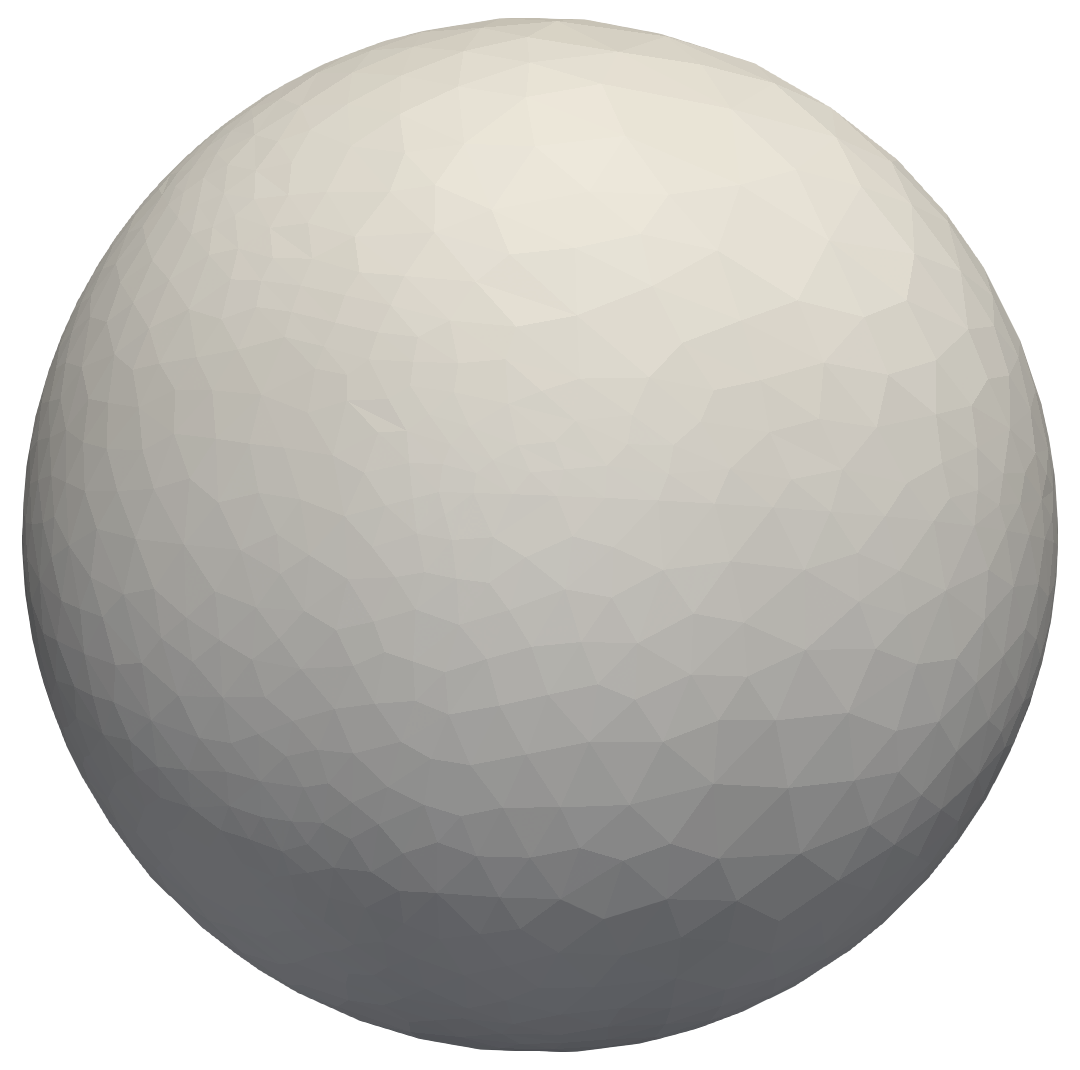}\\
	\includegraphics[width=0.3\textwidth]{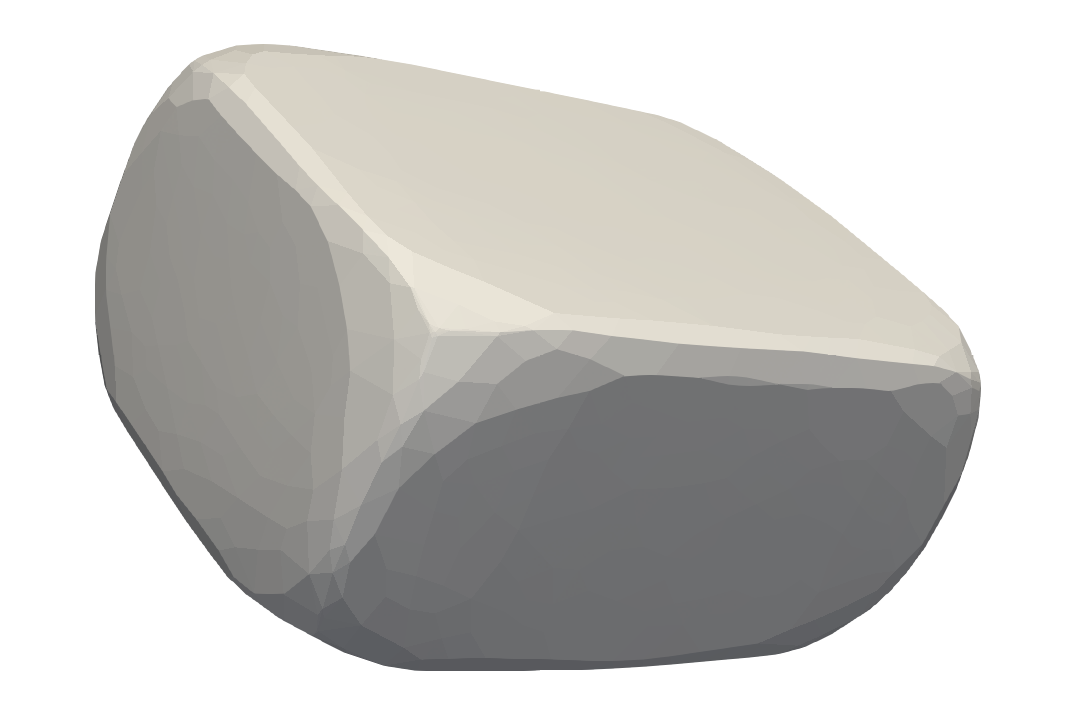}
	\includegraphics[width=0.3\textwidth]{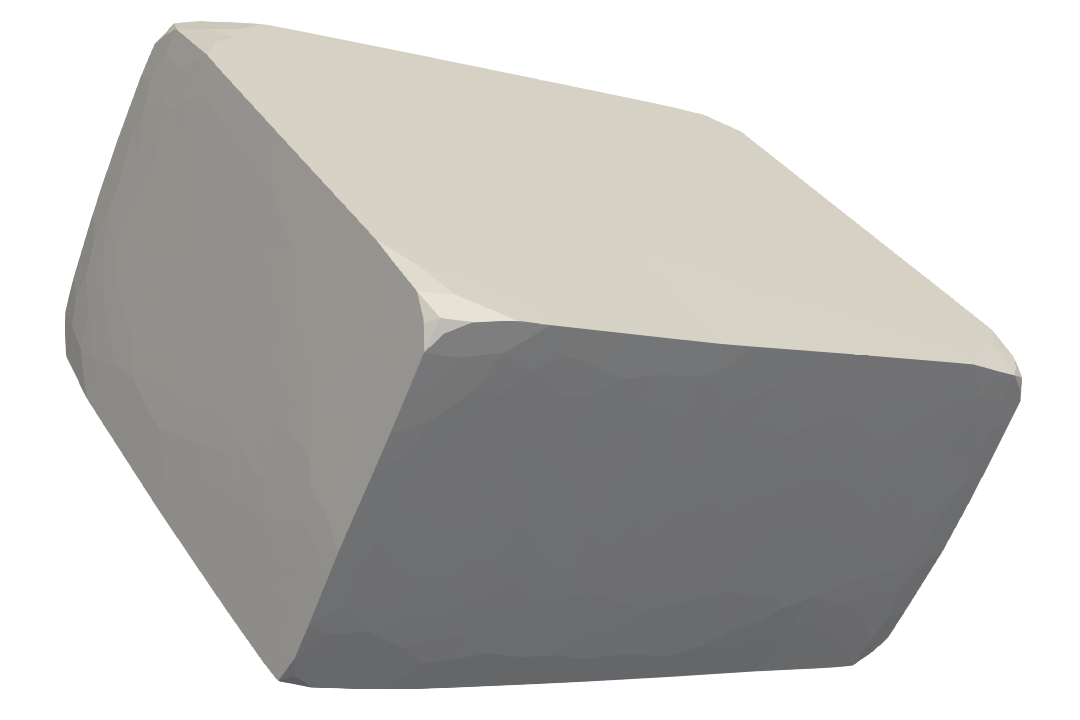}
	\includegraphics[width=0.3\textwidth]{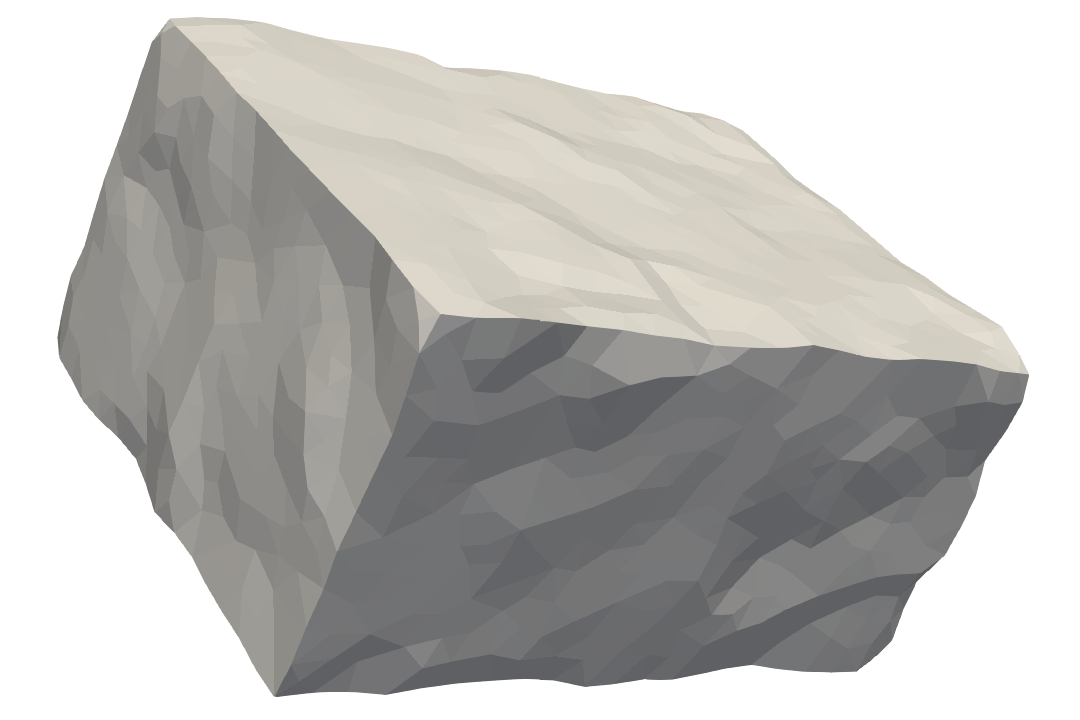}\\
	\includegraphics[width=0.3\textwidth]{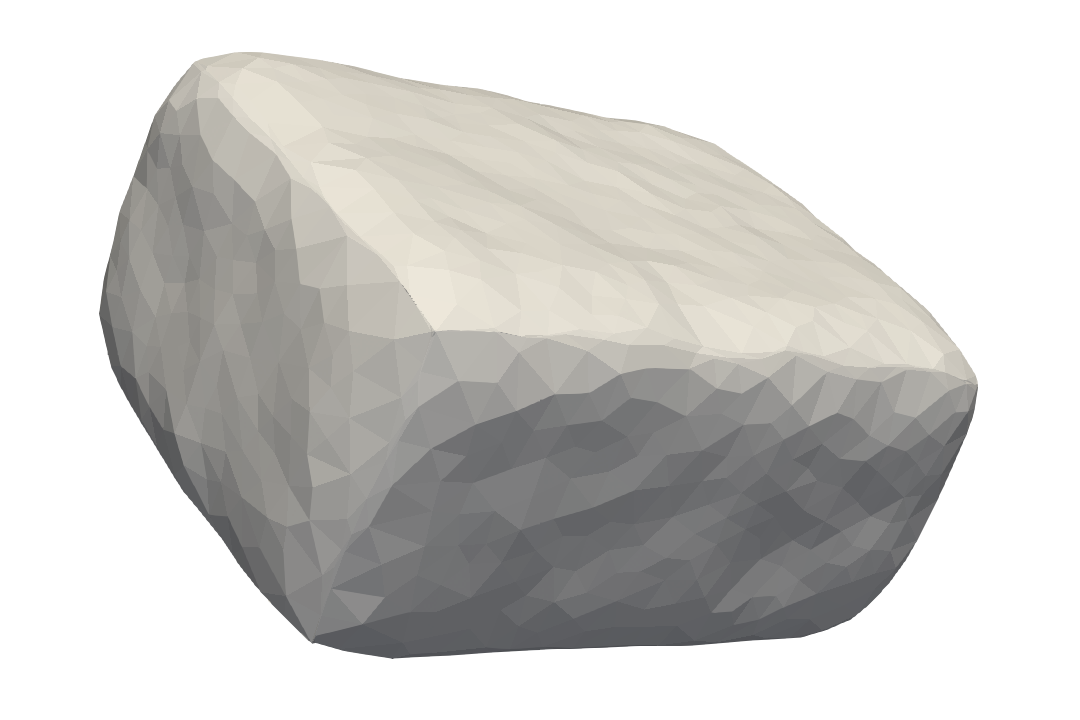}
	\includegraphics[width=0.3\textwidth]{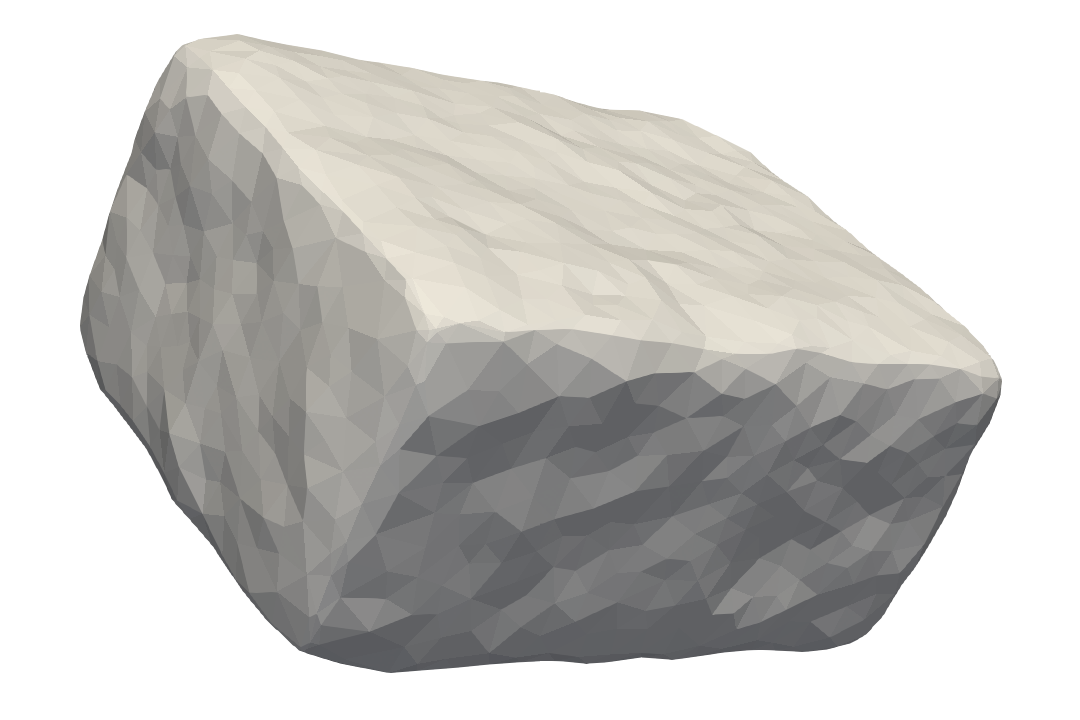}
	\includegraphics[width=0.3\textwidth]{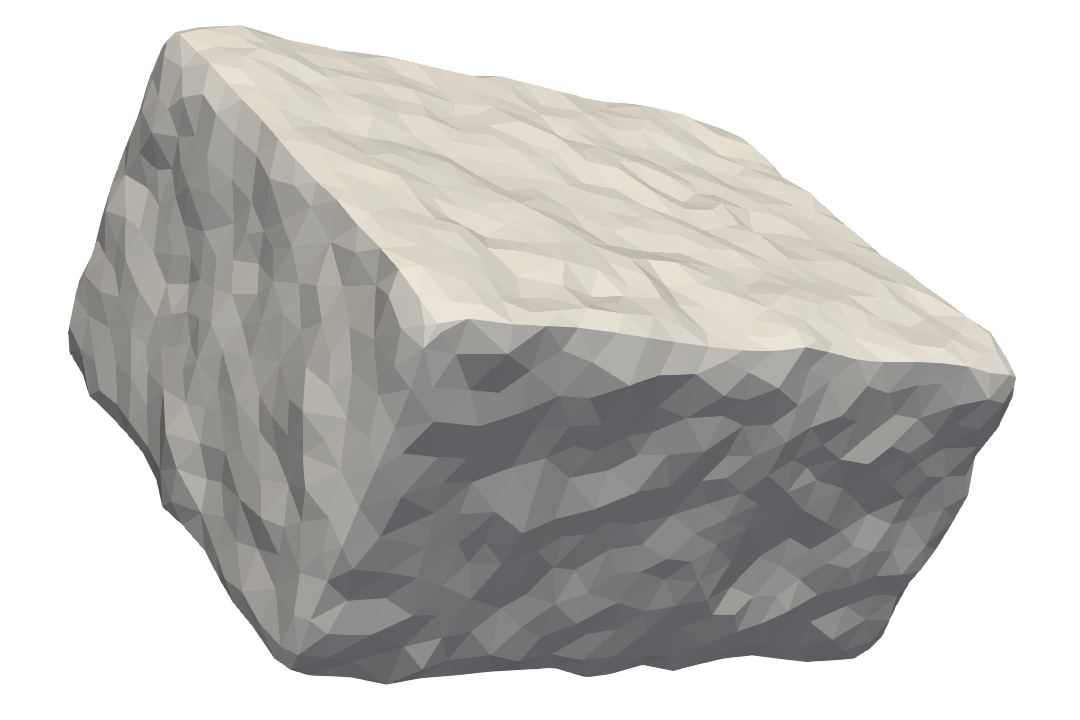}
	\caption{Top row: original box and desired outcome of the noise reduction, noisy box with vertex coordinates~$\widetilde \bx_V$ used for the data fidelity term, and sphere with same connectivity used as the initial guess; middle row: results for total variation of the normal \eqref{eq:surface_recovery_DTV} with $\beta = 10^{-2},\ 10^{-3}, 10^{-4}$; bottom row: results for surface area regularization \eqref{eq:surface_recovery_area} with $\gamma = 0.02,\ 0.01,\ 0.005$.}
	\label{fig:comparison_TV_surface_area}
\end{figure}

\section{Discrete Split Bregman Iteration}
\label{sec:discrete_split_Bregman}

In this section, we develop an optimization scheme to solve the non-smooth problem~\eqref{eq:tv_of_normal_discrete}. To this end, we adapt the well-known split Bregman method to our setting. 
This leads to a discrete realization of the approach presented in~\cite[section 4]{BergmannHerrmannHerzogSchmidtVidalNunez2019:1_preprint}. Recall that combining~\eqref{eq:tv_of_normal_discrete} with~\eqref{eq:geometric_inverse_problem_with_TV_of_normal} results in the problem
\begin{equation}
	\label{eq:geometric_inverse_problem_with_TV_of_normal_discrete}
	\begin{aligned}
		& \text{Minimize} \quad \ell(u(\Omega_h),\Omega_h) + \beta \, \sum_E d(\bn_E^+,\bn_E^-) \abs{E} \\
		& \text{w.r.t.\ the vertex positions of $\Omega_h$},
	\end{aligned}
\end{equation}
where $E$ are the edges of the unknown part~$\Gamma_h$ of the boundary $\partial\Omega_h$. We will consider a concrete example in \Cref{subsec:EIT}.

Notice that the second term in the objective in \eqref{eq:geometric_inverse_problem_with_TV_of_normal_discrete} is non-differentiable whenever $\bn_E^+ = \bn_E^-$ occurs on at least one edge.
Following the classical split Bregman approach, we introduce a splitting in which the variation of the normal vector becomes an independent variable.
Since this variation is confined to edges, where the normal vector jumps (without loss of generality) from $\bn_E^+$ to $\bn_E^-$, this new variable becomes
\begin{equation}
	\label{eq:geometric_inverse_problem_split_constraint}
	\bd_E = \mylog{\bn_E^+}{\bn_E^-} \in \tangent{\bn_E^+}{\sphere{2}}
	.
\end{equation}
Here $\mylog{\bn_E^+}{\bn_E^-}$ denotes the logarithmic map, which specifies the unique tangent vector at the point $\bn_E^+$ such that the geodesic departing from $\bn_E^+$ in that direction will reach $\bn_E^-$ at unit time.
The logarithmic map is well-defined whenever $\bn_E^+ \neq -\bn_E^-$.
Moreover, $\absRiemannian{\mylog{\bn_E^+}{\bn_E^-}} = d(\bn_E^+,\bn_E^-)$ holds; see~\eqref{eq:logarithmic_map_on_the_sphere} for more details.

Together with the set of Lagrange multipliers $\bb_E\in\tangent{\bn_E^+}{\sphere{2}}$, we define the Augmented Lagrangian pertaining to \eqref{eq:geometric_inverse_problem_with_TV_of_normal_discrete} and \eqref{eq:geometric_inverse_problem_split_constraint} as
\begin{equation}
	\label{eq:Augmented_Lagrangian_rescaled_discrete}
	\LL(\Omega_h,\bd,\bb)
	\coloneqq
	\ell(u(\Omega_h),\Omega_h)
	+
	\beta \, \sum_E \absRiemannian{\bd_E} \abs{E} \\
	+
	\frac{\lambda}{2} \sum_E \abs{E} \bigabsRiemannian{\bd_E-\mylog{\bn_E^+}{\bn_E^-} - \bb_{E}}^2
	.
\end{equation}
The vectors $\bd$ and $\bb$ are simply the collections of their entries $\bd_E, \bb_E \in \tangent{\bn_E^+}{\sphere{2}}$, three components per edge $E$. 
Hence, since the tangent space $\tangent{\bn_E^+}{\sphere{2}}$ changes between shape updates, the respective quantities have to be parallely transported, which is a major difference to ADMM methods in Euclidean or Hilbert spaces.

We state the split Bregman iteration in \Cref{alg:Split_Bregman_discrete}.

\begin{algorithm}{Split Bregman method for \eqref{eq:geometric_inverse_problem_with_TV_of_normal_discrete}}
	\begin{algorithmic}[1]
		\REQUIRE Initial domain $\Omega_h^{(0)}$ 
		\ENSURE Approximate solution of \eqref{eq:geometric_inverse_problem_with_TV_of_normal_discrete}
		\STATE Set $\bb^{(0)} \coloneqq \bnull$, $\bd^{(0)} \coloneqq \bnull$
		\STATE Set $k \coloneqq 0$
		\WHILE{not converged}
		\STATE
			Perform several gradient steps for $\Omega_h \mapsto \LL(\Omega_h,\bd^{(k)},\bb^{(k)}) $ at $\Omega_h^{(k)}$ to obtain $\Omega_h^{(k+1)}$
			\label{item:alg:Split_Bregman_discrete_shape_step}
		\STATE
			Parallely transport the multiplier estimate $\bb_E^{(k)}$ on each edge $E$ from $\tangent{\bn_E^{+,(k)}}{\sphere{2}}$ to $\tangent{\bn_E^{+,(k+1)}}{\sphere{2}}$ along the geodesic from $\bn_E^{+,(k)}$ to $\bn_E^{+,(k+1)}$
			\label{item:alg:Split_Bregman_discrete_parallel_transport_step}
		\STATE
			Set $\bd^{(k+1)} \coloneqq \argmin \LL(\Omega_h^{(k+1)},\bd^{(k)},\bb^{(k)})$, see \eqref{eq:solution_of_d-problem_discrete}
			\label{item:alg:Split_Bregman_discrete_tv_step}
		\STATE 
			Update the Lagrange multipliers, i.e., set $\bb_E^{(k+1)} \coloneqq \bb_E^{(k)} + \mylog{\bn_E^{+,(k+1)}}{\bn_E^{-,(k+1)}} - \bd_E^{(k+1)}$ for all edges $E$
			\label{item:alg:Split_Bregman_discrete_multiplier_step}
		\STATE Set $k \coloneqq k+1$
		\ENDWHILE
	\end{algorithmic}
	\label{alg:Split_Bregman_discrete}
\end{algorithm}

We now address the individual steps of \cref{alg:Split_Bregman_discrete} in more detail, i.e., the successive minimization with respect to the unknown vertices of $\Omega_h$ and $\bd$, followed by an explicit update for the multiplier $\bb$.

Step~\ref{item:alg:Split_Bregman_discrete_shape_step} is the minimization of~\eqref{eq:Augmented_Lagrangian_rescaled_discrete} with respect to the unknown vertex positions of $\Omega_h$. 
To this end, we employ a gradient descent scheme, where we compute the sensitivities with respect to those node positions discretely, see \Cref{subsec:DiscreteShapeDerivative} for more details. 
Following~\cite{GoldsteinOsher2009}, an approximate minimization suffices, and thus only a certain number of steepest descent steps are performed. 
After $\Omega_h^{(k)}$ has been updated to $\Omega_h^{(k+1)}$, the quantity $\bb_E^{(k)} \in \tangent{\bn_E^{+,(k)}}{\sphere{2}}$ has to be parallely transported into the new tangent space $\tangent{\bn_E^{+,(k+1)}}{\sphere{2}}$, see step~\ref{item:alg:Split_Bregman_discrete_parallel_transport_step}, which is detailed in \eqref{eq:parallel_transport_on_the_sphere} for more details.

Step~\ref{item:alg:Split_Bregman_discrete_tv_step} is the optimization of~\eqref{eq:Augmented_Lagrangian_rescaled_discrete} with respect to $\bd$, which is a non-smoooth problem. 
It can be solved explicitly by one vectorial shrinkage operation per edge~$E$. 
Given the data $\Omega_h^{(k+1)}$ and associated normal field $\bn^{(k+1)}$, as well as multiplier $\bb_E^{(k)}$ parallely transported into $\tangent{\bn_E^{+,(k+1)}}{\sphere{2}}$, the minimizer of \eqref{eq:Augmented_Lagrangian_rescaled_discrete} is given by
\begin{equation}
	\label{eq:solution_of_d-problem_discrete}
	\alert{\bd}^{(k+1)}_E
	\coloneqq
	\max \varh\{\}{ \bigabsRiemannian{\mylog{\bn_E^{+,(k+1)}}{\bn_E^{-,(k+1)}} + \bb_E^{(k)}} - \frac{\beta}{\lambda}, \; 0 }
	\,
	\frac{\mylog{\bn_E^{+,(k+1)}}{\bn_E^{-,(k+1)}} + \bb_E^{(k)}}{\bigabsRiemannian{\mylog{\bn_E^{+,(k+1)}}{\bn_E^{-,(k+1)}} + \bb_E^{(k)}}}
\end{equation}
for each edge $E$.
Notice that \eqref{eq:solution_of_d-problem_discrete} is independent of the previous value $\bd_E^{(k)}$ and thus a parallel transport of $\bd_E^{(k)}$ into the updated tangent space is not necessary.

Step~\ref{item:alg:Split_Bregman_discrete_multiplier_step} is the multiplier update for $\bb$, which is done explicitly via
\begin{equation*}
	\bb_E^{(k+1)} = \bb_E^{(k)}+\mylog{\bn_E^{+,(k+1)}}{\bn_E^{-,(k+1)}}-\bd_E^{(k+1)}
\end{equation*}
for each edge $E$.

\section{An EIT Model Problem and its Implementation in \fenics}
\label{sec:implementation_details}

In this section we address some details concerning the implementation of \Cref{alg:Split_Bregman_discrete} in the finite element framework \fenics\ (version~2018.2.dev0), \cite{LoggMardalWells2012:1,AlnaesBlechtaHakeJohanssonKehletLoggRichardsonRingRognesWells2015}.  
For concreteness, we elaborate on a particular reduced loss function $\ell(u(\Omega),\Omega)$ where the state $u(\Omega)$ arises from a PDE modeling a geological electrical impedance tomography (EIT) problem with Robin-type far field boundary conditions. 
We introduce the problem under consideration first and discuss implementation details and derivative computations later on.

\subsection{EIT Model Problem}
\label{subsec:EIT}

\begin{figure}[htp]
	\begin{center}
		\includegraphics[width=0.46\textwidth]{./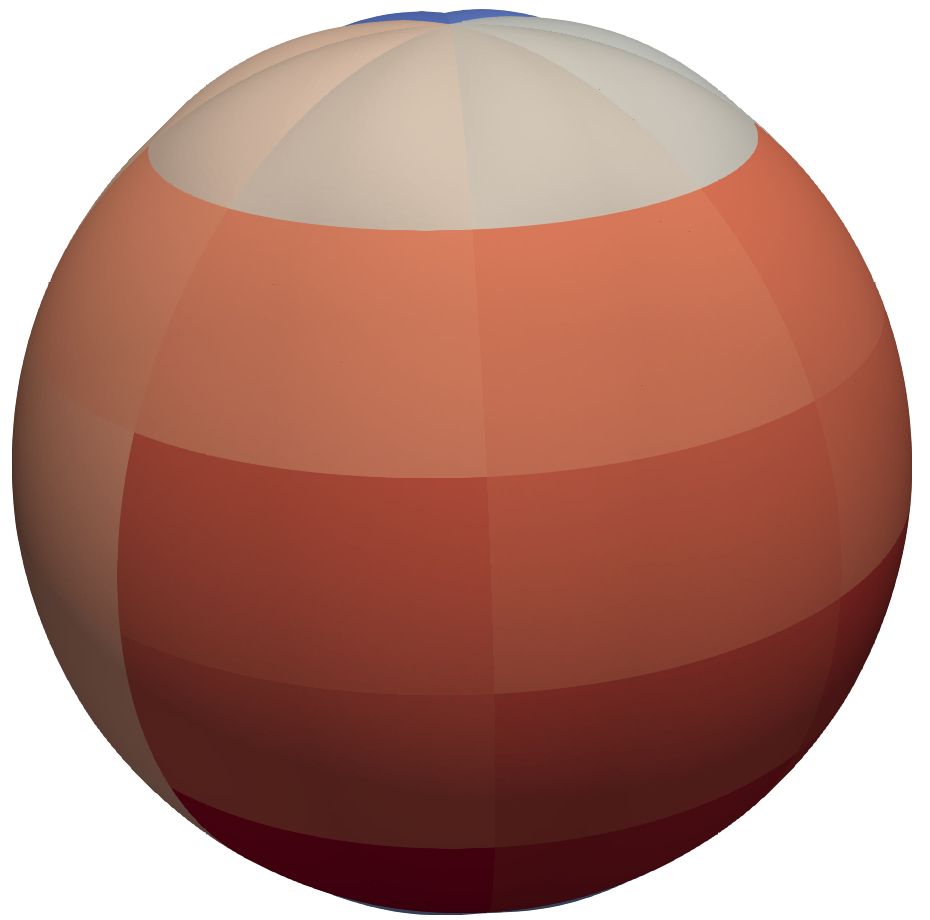}
		\hfill
		\includegraphics[width=0.49\textwidth]{./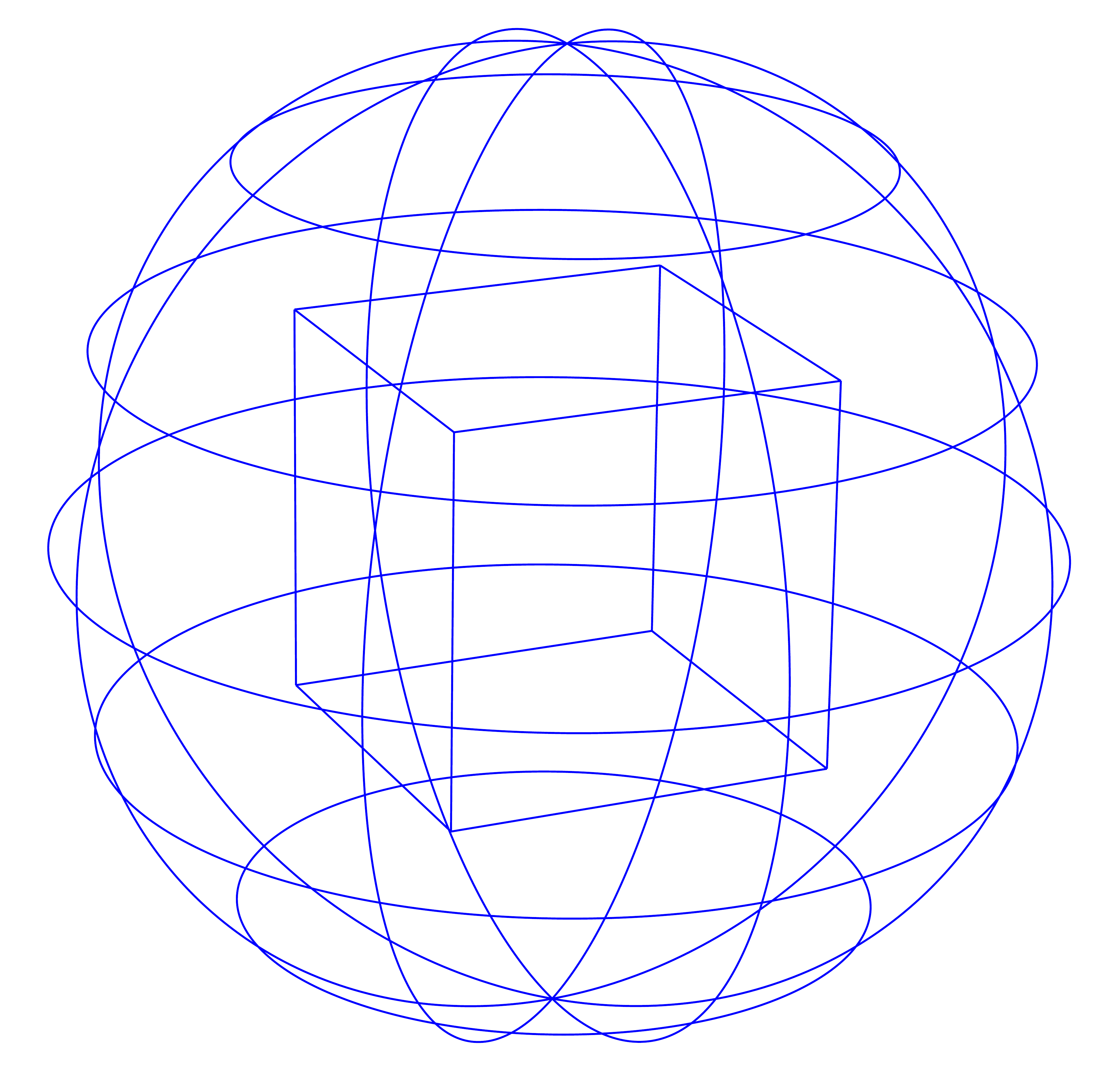}
	\end{center}
	\caption{The left plot shows the domain $\Omega$ considered in the numerical example. Each color on the outer boundary represents the support of one out of $r = 48$~electric sources $f_i$. The right figure shows a wireframe plot revealing the true inclusion $\Gamma_1$, i.e., the boundary of the cube.}
	\label{fig:EIT_Problem}
\end{figure}

Electrical impedance tomography (EIT) problems are a prototypical class of inverse problems.
Common to these problems is the task of reconstructing the internal conductivity inside a volume from boundary measurements of electric potentials or currents. 
These problems are both nonlinear and severely ill-posed and require appropriate regularization; see for instance~\cite{SantosaVogelius1990,CheneyIsaacsonNewell1999,ChungChanTai2005}. 

Traditionally, EIT problems are modeled with Neumann (current) boundary conditions and the internal conductivity is an unknown function across the entire domain.
In order to focus on the demonstration of the utility of \eqref{eq:tv_of_normal_discrete} as a regularizer in geometric inverse problems, we consider a simplified situation in which we seek to reconstruct a perfect conductor inside a domain of otherwise homogeneous electrical properties. 

Consequently, the unknowns are the vertex positions of the interface of the inclusion.
As a perfect conductor shields its interior from the electric field, there is no necessity to mesh and simulate the interior of the inclusion.
However, we mention that our methodology can be extended also to interface problems, non-perfect conductors and other geometric inverse problems.

The perfect conductor is modeled via a homogenous Neumann condition on the unknown interior boundary $\Gamma_1$ of the domain $\Omega$. 
To overcome the non-uniqueness of the electric potential, we employ Robin boundary conditions on the exterior boundary $\Gamma_2$.
The use of homogeneous Robin boundary conditions to model the far field is well-established for geological EIT problems; see, e.g., \cite{Helfrich-Schkarbanenko2011}.
We use them here also for current injection.

The geometry of our model is shown in \Cref{fig:EIT_Problem}, where $\Gamma_1$ is the unknown boundary of the perfect conductor and $\Gamma_2$ is a fixed boundary where currents are injected and measurements are taken.
We assume that $i = 1, \ldots, r \in \N$ experiments are conducted, each resulting in a measured electric potential $z_i \in \CG{1}(\Gamma_2)$, the finite element space consisting of piecewise linear, globally continuous functions on the outer boundary~$\Gamma_2$.
Experiment~\#$i$ is conducted by applying the right hand side source $f_i \in \DG{0}(\Gamma_2)$, which is the characteristic function of one of the colored regions shown in \Cref{fig:EIT_Problem}. Here, $\DG{0}$ denotes the space of piecewise constant functions.
We then seek to reconstruct the interface of the inclusion $\Gamma_1$ by solving the following regularized least-squares problem of type \eqref{eq:geometric_inverse_problem_with_TV_of_normal},
\begin{equation}
	\label{eq:MainOpt}
	\begin{aligned}
		\text{Minimize} \quad & \frac{1}{2} \sum_{i=1}^r \int_{\Gamma_2} \abs{u_i-z_i}^2 \, \ds + \beta \, \abs{\bn}_{DTV(\Gamma_1)}\\
		\text{s.t.} \quad &
		\left\{
			\begin{aligned}
				-\Delta u_i & = 0 & & \text{in } \Omega_h, \\
				\frac{\partial u_i}{\partial \bn} & = 0 & & \text{on }\Gamma_1, \\
				\frac{\partial u_i}{\partial \bn} + \alpha \, u_i & = f_i & & \text{on }\Gamma_2
			\end{aligned}
		\right.
	\end{aligned}
\end{equation}
with respect to the vertex positions of $\Gamma_1$. Here $u_i \in \CG{1}(\Omega_h)$ is the computed electric field for source $f_i$. 
Hence, the problem features $r$~PDE constraints with identical operator but different right hand sides.

As detailed in \Cref{subsec:DiscreteShapeDerivative}, we compute the shape derivative of the least-squares objective and the PDE constraint separately from the shape derivative of the regularization term. To evaluate the former, we utilize a classical adjoint approach. 
To this end, we consider the Lagrangian
\begin{multline}
	\label{eq:LagrangianSmoothPart}
	F(u_1, \ldots, u_r, p_1, \ldots, p_r, \Omega_h)
	\coloneqq 
	\\
	\sum_{i=1}^r \left[ \int_{\Gamma_2} \frac{1}{2} \abs{u_i-z_i}^2 \, \ds + \int_{\Omega_h} \nabla p_i \cdot \nabla u_i \, \d\bx + \int_{\Gamma_2} p_i (\alpha \, u_i - f_i) \, \ds \right]
\end{multline}
for $p_i \in \CG{1}(\Omega_h)$. The differentiation w.r.t.\ $u_i$ leads to the following adjoint problem for $p_i$:
\begin{equation}
	\label{eq:PDEAdjoint}
	\left\{
		\begin{aligned}
			-\Delta p_i & = 0 & & \text{in }\Omega_h, \\
			\frac{\partial p_i}{\partial \bn} & = 0 & & \text{on }\Gamma_1, \\
			\frac{\partial p_i}{\partial \bn} + \alpha \, p_i & = - (u_i - z_i) & & \text{on }\Gamma_2.
		\end{aligned}
	\right.
\end{equation}

The above adjoint PDE was implemented by hand.
Since all forward and adjoint problems are governed by the same differential operator, we assemble the associated stiffness matrix once and solve the state and adjoint equations via an ILU-preconditioned conjugate gradient method.

Provided that $u_i$ and $p_i$ solve the respective state and adjoint equations, the directional derivative of $\ell(u(\Omega_h),\Omega_h)$ coincides with the partial directional derivative of $F(u_1, \ldots, u_r, p_1, \ldots, p_r, \Omega_h)$, both with respect to the vertex positions.
In practice, we evaluate the latter using the coordinate derivative functionality of \fenics\ as described in the following subsection.

\subsection{Discrete Shape Derivative}
\label{subsec:DiscreteShapeDerivative}

We now focus on computing the sensitivity of finite element functionals, when mesh vertices $\bx$ of $\Omega_h$ are moved in accordance to $\bx_\varepsilon = \bx + \varepsilon \bV_{\Omega_h}$ with $\bV_{\Omega_h} \in \CG{1}^3(\Omega_h)$. As discussed in~\cite{HamMitchellPaganiniWechsung2018_preprint}, a convenient way to compute this within the finite element world is by tapping into the transformation of the reference element to the physical one. Hence, we use the symbol $\d \ell(u(\Omega_h), \Omega_h)[\bV_{\Omega_h}]$ for this object and we obtain it using the coordinate derivative functionality, first introduced in \fenics\ release 2018.2.dev0.

Our split Bregman scheme requires the shape derivative of \eqref{eq:Augmented_Lagrangian_rescaled_discrete}, which is given by
\begin{equation}
	\label{eq:LagrangianTwoTerm}
	\d\LL(\Omega_h,\bd,\bb)[P_{\Omega_h}(\bV_{\Gamma_1})] = \d\ell(u(\Omega_h), \Omega_h)[P_{\Omega_h}(\bV_{\Gamma_1})] + \d m(\Gamma_1)[\bV_{\Gamma_1}],
\end{equation}
where
\begin{equation}
	\label{eq:RegulPart}
	m(\Gamma_1) 
	\coloneqq 
	\beta \, \sum_E \absRiemannian{\bd_E} \abs{E} + \frac{\lambda}{2} \sum_E \abs{E} \bigabsRiemannian{\bd_E-\mylog{\bn_E^+}{\bn_E^-} - \bb_{E}}^2
\end{equation}
originates from the splitting approach~\eqref{eq:Augmented_Lagrangian_rescaled_discrete}.
Because our design variable is $\Gamma_1$ only, we introduce the extension $P_{\Omega_h}(\bV_{\Gamma_1})$ of $\bV_{\Gamma_1} \in \CG{1}^3(\Gamma_1)$ to the volume $\Omega_h$ by padding with zeros. Furthermore, a reduction to boundary only sensitivities can also be motivated from considering shape derivatives in the continuous setting, see~\cite[Section~3]{BergmannHerrmannHerzogSchmidtVidalNunez2019:1_preprint}.

The term $\d\ell(u(\Omega_h), \Omega_h)[P_{\Omega_h}(\bV_{\Gamma_1})]$ is computed via the adjoint approach as explained above,
\begin{equation*}
	\d\ell(u(\Omega_h), \Omega_h)[P_{\Omega_h}(\bV_{\Gamma_1})] 
	= 
	\partial_{\Omega_h} F(u_1, \ldots, u_r, p_1, \ldots, p_r, \Omega_h)[P_{\Omega_h}(\bV_{\Gamma_1})]
	.
\end{equation*}

In order to employ this AD functionality, \eqref{eq:RegulPart} needs to be given as a UFL form, a domain specific language based on Python, which forms the native language of the \fenics\ framework, see \cite{AlnaesLoggOlgaardRognesWells2014}.
Such a UFL representation is easy to achieve if all mathematical expressions are finite element functions. 
Notice that $\bd$ and $\bb$ in \eqref{eq:RegulPart} are constant functions on the edges of the boundary mesh representing $\Gamma_1$.
We can thus represent them in the so called \textit{HDivTrace} space of lowest order in \fenics.

From the directional derivatives \eqref{eq:LagrangianTwoTerm}, we pass to a shape gradient on the surface w.r.t.\ a scaled $H^1(\Gamma_1)$ scalar product by solving a variational problem.
This problem involves the weak form of a Laplace--Beltrami operator with potential term and it finds $\bW_{\Gamma_1} \in \CG{1}(\Gamma_1)^3$ such that
\begin{multline}
	\label{eq:Grad}
	\int_{\Gamma_1} 10^{-4} (\nabla \bW_{\Gamma_1}, \nabla \bV_{\Gamma_1})_2 + (\bW_{\Gamma_1}, \bV_{\Gamma_1})_2 \, \ds 
	\\
	= 
	\d\ell(u(\Omega_h),\Omega_h)[P_{\Omega_h}(\bV_{\Gamma_1})] + \d m(\Gamma_1)[\bV_{\Gamma_1}]
\end{multline}
holds for all test functions $\bV_{\Gamma_1} \in \CG{1}(\Gamma_1)^3$.

The previous procedure provides us with a shape gradient $\bW_{\Gamma_1}$ on the surface $\Gamma_1$ alone.
In order to propagate this information into the volume $\Omega_h$, we solve the following mesh deformation equation: 
find $\bW_{\Omega_h} \in \CG{1}(\Omega_h)^3$ such that
\begin{align}
	\int_{\Omega_h} (\nabla \bW_{\Omega_h}, \nabla \bV_{\Omega_h})_2 + (\bW_{\Omega_h}, \bV_{\Omega_h})_2 \, \ds = 0
\end{align}
for all test functions $\bV_{\Omega_h} \in \CG{1}(\Omega_h)^3$ with zero Dirichlet boundary conditions, where $\bW_{\Omega_h}$ is subject to the Dirichlet boundary condition $\bW_{\Omega_h} = \bW_{\Gamma_1}$ on $\Gamma_1$ and $\bW_{\Omega_h} = \bnull$ on $\Gamma_2$.
Subsequently, the vertices of the mesh are moved in the direction of $\bW_{\Omega_h}$.

\subsection{Intrinsic Formulation Using Co-Normal Vectors}
\label{subsec:Co_normal}

We recall that our functional of interest \eqref{eq:tv_of_normal_discrete} is formulated in terms of the unit outer normal $\bn$ of the oriented surface~$\Gamma_1$.
This leads to the term \eqref{eq:RegulPart} inside the augmented Lagrangian \eqref{eq:Augmented_Lagrangian_rescaled_discrete}.
In order to utilize the differentiation capability of \fenics\ w.r.t.\ vertex coordinates, we need to represent \eqref{eq:RegulPart} in terms of an integral. 
Since the edges are the interior facets of the surface mesh for $\Gamma_1$, and $\bd$ and $\bb$ can be represented as constant on edges as explained above, \eqref{eq:RegulPart} can indeed be written as an integral w.r.t.\ the interior facet measure \texttt{dS} on $\Gamma_1$.
Then, however, the outer normal vectors appearing in the term $\mylog{\bn_E^+}{\bn_E^-}$ are not available.
We remedy the situation by observing that the geodesic distance between two normal vectors $\bn_E^+$ and $\bn_E^-$ on the two triangles $T_1$ and $T_2$ sharing the edge $E$ can also be expressed via the co-normal (or in-plane normal) vectors $\bmu_E^+$, $\bmu_E^-$, as is shown in \Cref{fig:CoNormal}.
Indeed, one has
\begin{equation*}
	\bigabs{\mylog{\bn_E^+}{\bn_E^-}}_2 = \bigabs{\mylog{\bmu_E^+}{(-\bmu_E^-)}}_2.
\end{equation*}
Since the co-normal vectors are intrinsic to the surface $\Gamma_1$, they are available on $\Gamma_1$ while $\bn_E^+$ and $\bn_E^-$ are not.

\begin{figure}[htp]
	\centering
	\begin{overpic}[width=0.5\textwidth]{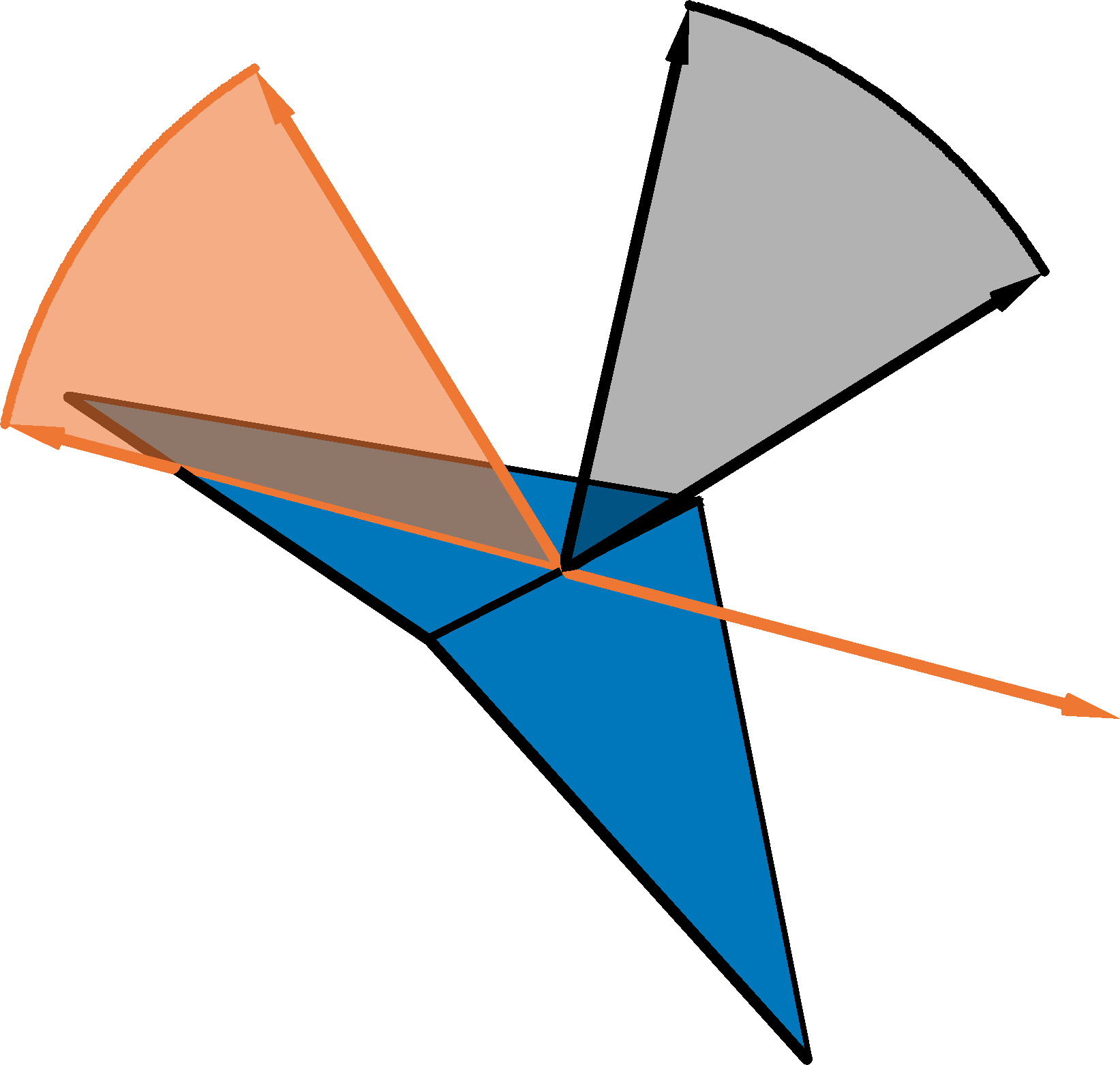}
		\put(50,35){\color{white}$T^+_E$\color{black}}
		\put(30,48){\color{white}$T^-_E$\color{black}}
		\put(80,57){$\bn_E^+$}
		\put(48,75){$\bn_E^-$}
		\put(80,28){$\bmu_E^-$}
		\put(2,48){$-\bmu_E^-$}
		\put(31,81){$\bmu_E^+$}
		\put(33,33){$E$}
	\end{overpic}
	\caption{The geodesic distance between normals $\bn_E^+$ and $\bn_E^-$ (shown in black) of two triangles $T_E^+$, $T_E^-$ which share the edge $E$ agrees with the geodesic distance between the co-normals $\bmu_E^+$ and $-\bmu_E^-$ (shown in orange).}
	\label{fig:CoNormal}
\end{figure}

\section{Numerical Results}
\label{sec:numerical_results}

In this section we present numerical results obtained with \Cref{alg:Split_Bregman_discrete} for the geological impedance tomography model problem described in the previous section.
The data of the problem are given in \Cref{tab:parameters_of_model_problem} and the initial guess of the inclusion $\Gamma_1$, as well as the true inclusion, are shown in  \Cref{fig:initial_and_target}.
The state $u$ and adjoint state $p$ were discretized using piecewise linear, globally continuous finite elements on a tetrahedral grid of $\Omega$ minus the volume enclosed by $\Gamma_1$.
The mesh has $\num{4429}$~vertices and $\num{19384}$~tetrahedra. Regarding the shape optimization problem of \Cref{alg:Split_Bregman_discrete}, we perform $10$ gradient steps per split Bregman iteration combined with an Armijo linesearch with starting step size of $10^2$. Also, we stop the whole algorithm, i.e., the outer Bregman iteration, when the initial gradient of the above mentioned shape optimization problem has a norm below $10^{-7}$ in the sense of \eqref{eq:Grad}. 
\begin{table}
	\centering
	\begin{tabular}{ll}
		\toprule
		domain $\Omega$ & unit sphere $B_1(0) \setminus [-0.4,0.4]^3 \subset \R^3$ \\
		measurement boundary $\Gamma_2$ & boundary of $\Omega$ \\
		boundary of true inclusion $\Gamma_1$ & boundary of $[-0.4,0.4]^3$ \\
		initial guess for $\Gamma_1$ & boundary of $B_{0.5}(0) \subset \R^3$ \\
		number of measurements & $r = 48$ \\
		Robin coefficient in \eqref{eq:MainOpt} & $\alpha = 10^{-5}$ \\
		split Bregman parameter & $\lambda = 10^{-5}$ \\
		standard deviation of noise & $\sigma = 0$ or $\sigma = 0.34\cdot10^{-2}$ \\
		regularization parameter \ldots & \\ 
		\quad for total variation regularization & $\beta = 10^{-6}$ \\
		\quad for surface area regularization & $\gamma = 5\cdot10^{-5}$, $2\cdot10^{-5}$ \\
		shape step size & $10^2$\\
		\bottomrule
		\\
	\end{tabular}
	\caption{Setting of the numerical experiments for \eqref{eq:MainOpt}.}
	\label{tab:parameters_of_model_problem}
\end{table}

In \Cref{fig:results_IOP}, we show the results obtained in the noise-free setting (top row) and with noise (bottom row).
In the latter case, normally distributed random noise is added with zero mean and standard deviation $\sigma = 0.34\cdot 10^{-2}$ per degree of freedom of $z_i$ on $\Gamma_2$ for each of the $r = 48$~simulations of the forward model \eqref{eq:MainOpt}. The amount of noise is considerable when put in relation to the average range of values for the simulated states, which is
\begin{equation*}
	\frac{\sum_{i=1}^{r}\big(\max_{\bs \in \Gamma_2} z_i(\bs) - \min_{\bs \in \Gamma_2} z_i(\bs)\big)}{r} \approx 0.34, \quad i = 1,\ldots, r.
\end{equation*}
Due to mesh corruption, we have to remesh $\Omega_h$ at some point in the cases with noise. Afterwards, we start again \Cref{alg:Split_Bregman_discrete} with the remeshed $\Omega_h$ as new initial guess.

For comparison, we also provide results obtained for a related problem in \Cref{fig:results_IOP}, using the popular surface area regularization with the same data otherwise. For the surface area regularization, $\beta \, \abs{\bn}_{TV(\Gamma_1)}$ is replaced by $\gamma \, \int_{\Gamma_1} \ds = \gamma \, \sum_F \abs{F}$, where $F$ are the facets of $\Gamma_1$.
Because the problem is smooth in this case, we apply a shape gradient scheme directly rather than a split Bregman scheme and terminate as soon as the norm of the gradient falls below $5\cdot 10^{-8}$.
The regularization parameters $\beta$ and $\gamma$ are selected by hand in each case.
Automatic parameter selection strategies can clearly be applied here as well, but this is out of the scope of the present paper.

\begin{figure}[htp]
	\centering
	\includegraphics[width=0.48\textwidth]{./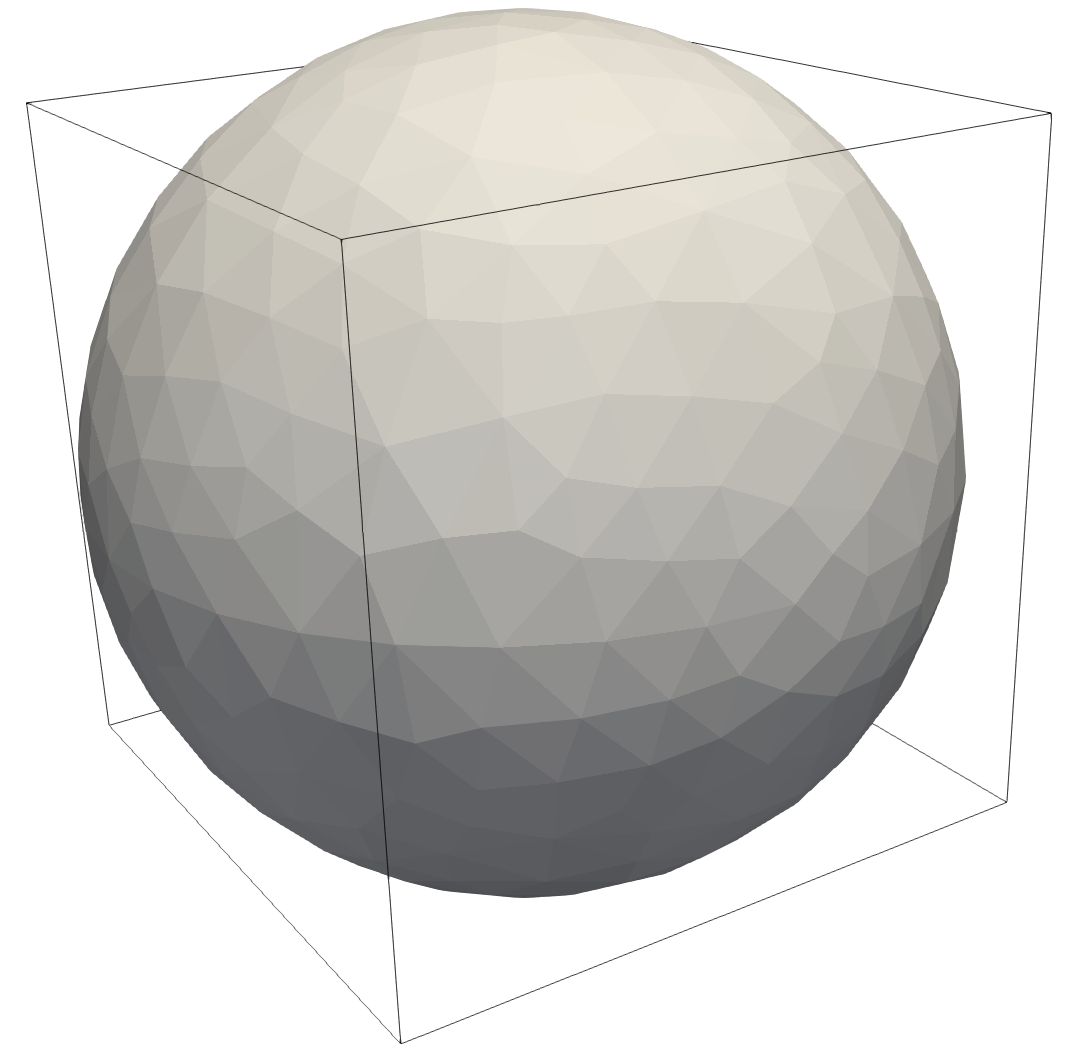}
	\includegraphics[width=0.48\textwidth]{./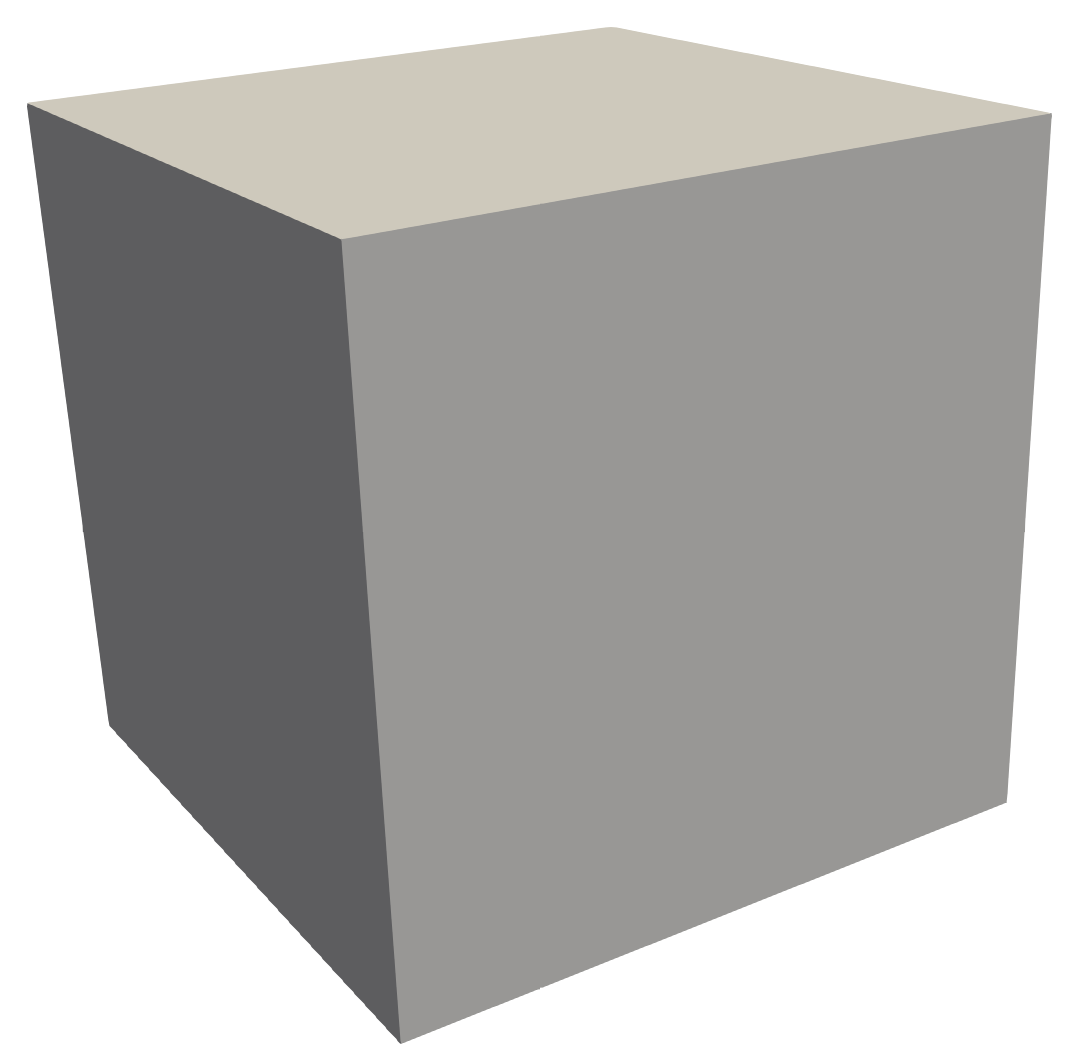}
	\caption{Initial guess for the inclusion $\Gamma_1$ on the left and the true inclusion on the right.}
	\label{fig:initial_and_target}
\end{figure}

\begin{figure}[htp]
	\centering
	\includegraphics[width=0.32\textwidth]{./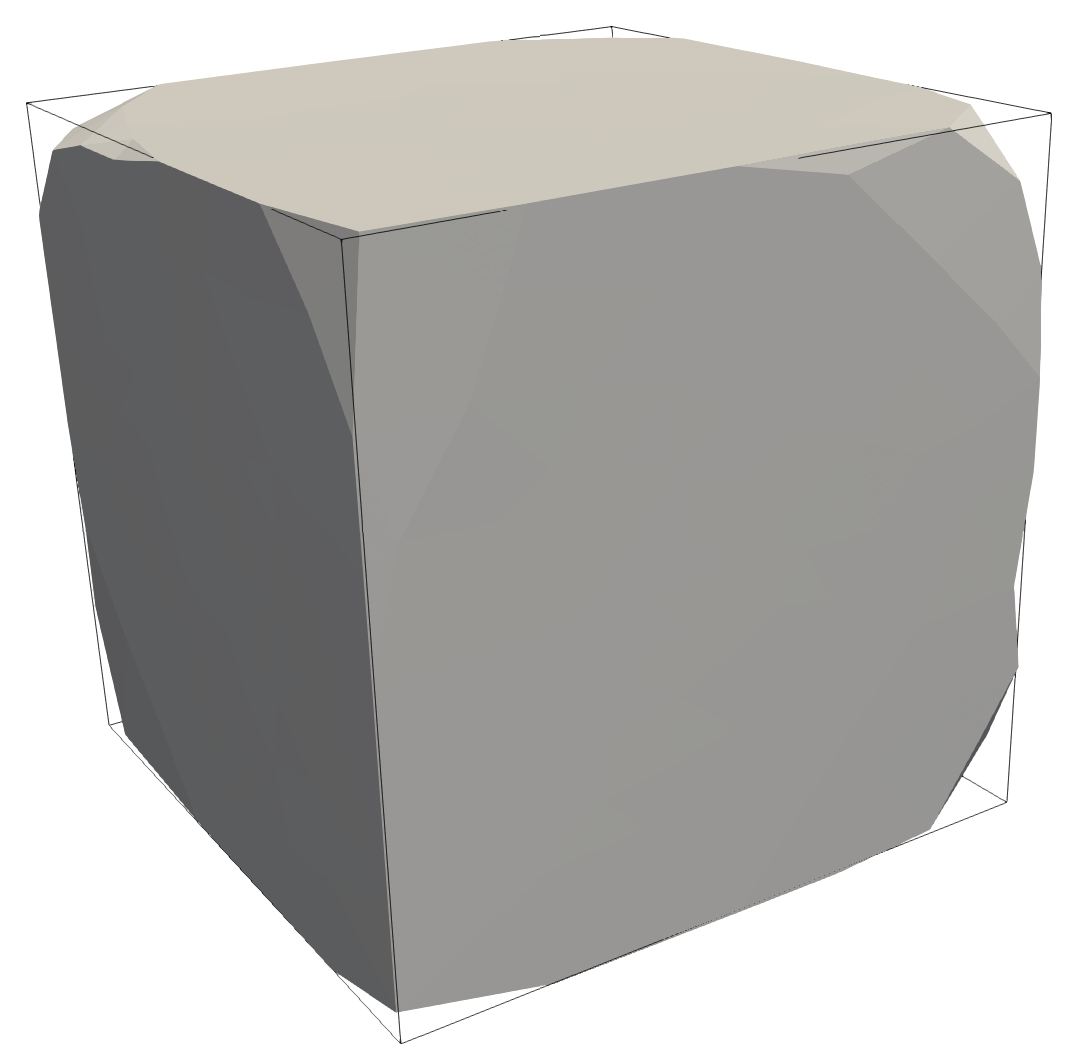}
	\includegraphics[width=0.32\textwidth]{./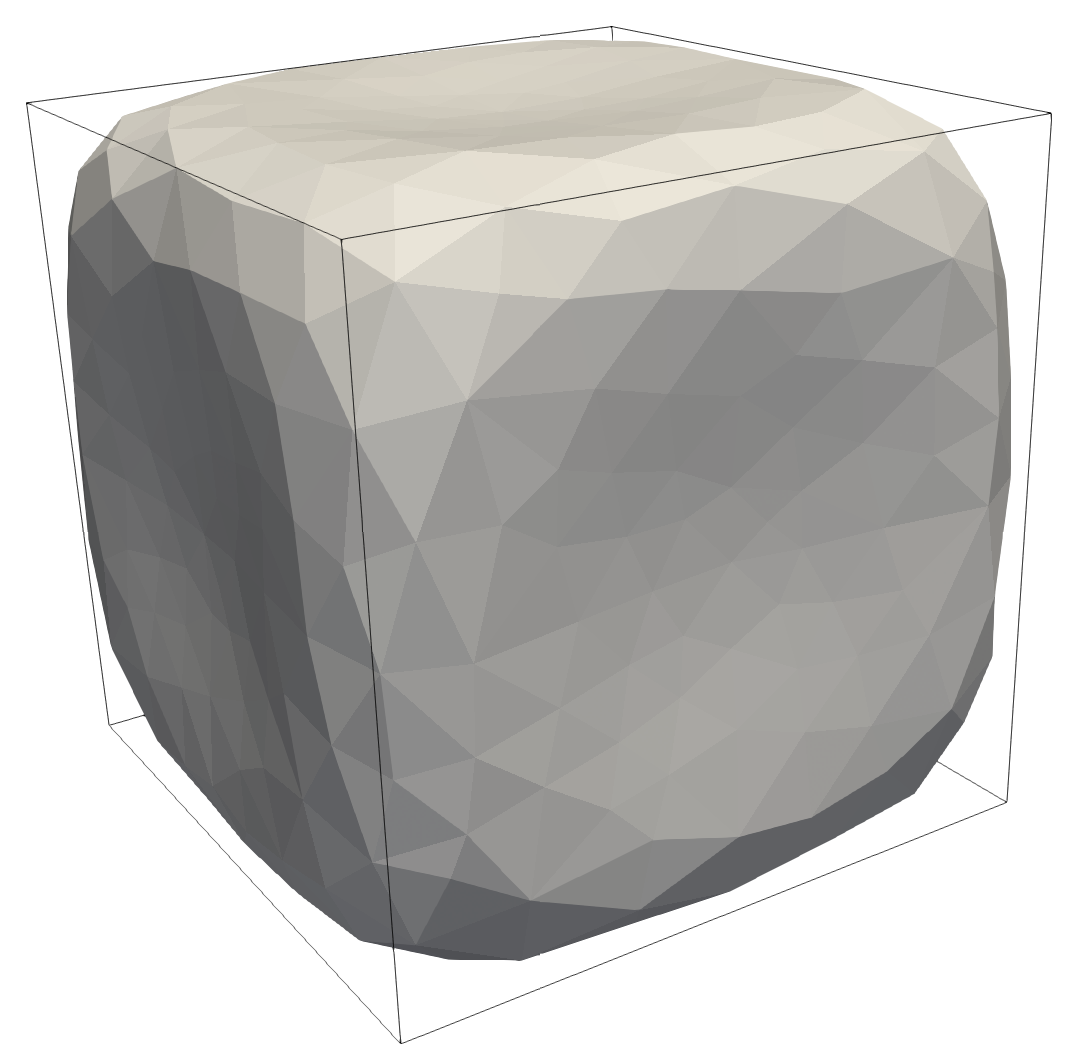}
	\includegraphics[width=0.32\textwidth]{./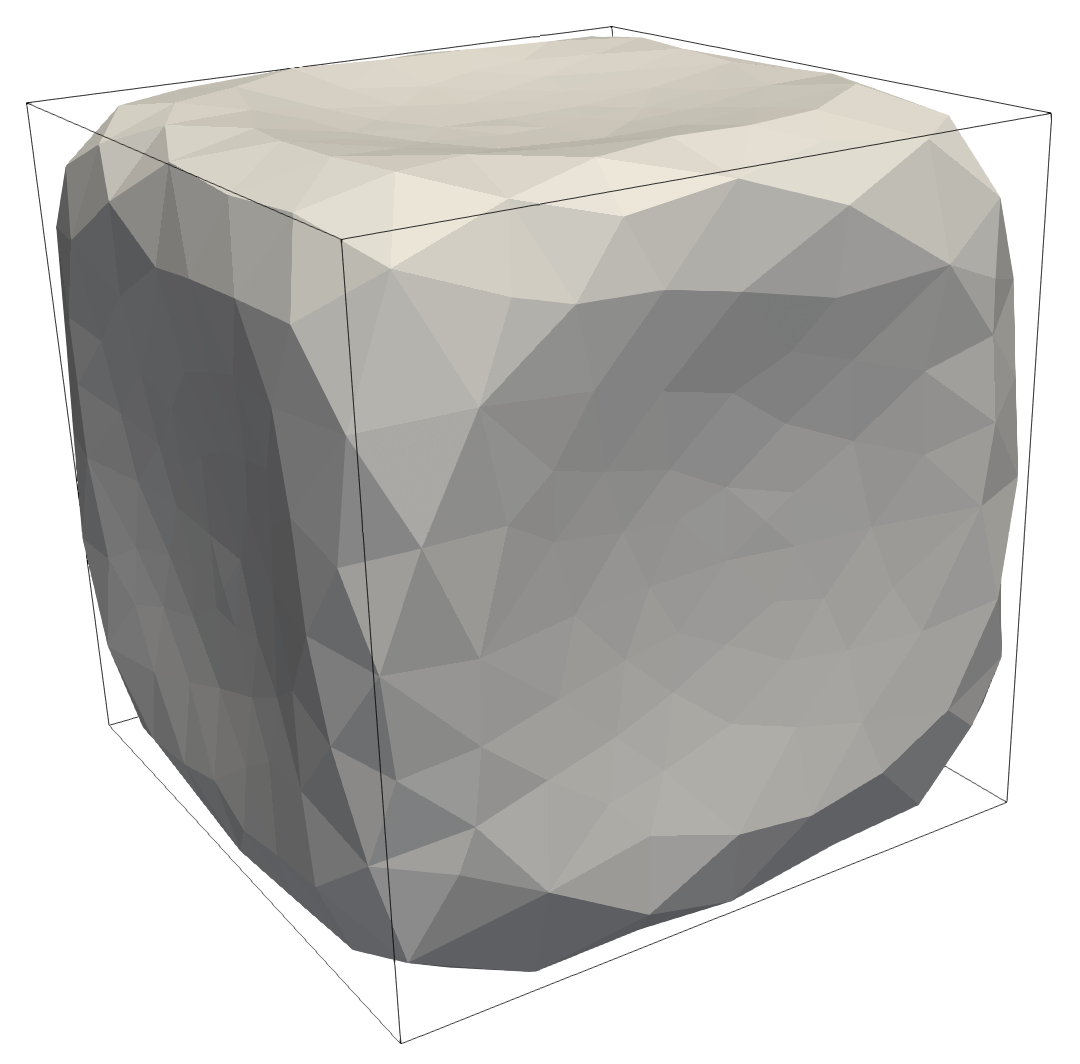}\\
	\includegraphics[width=0.32\textwidth]{./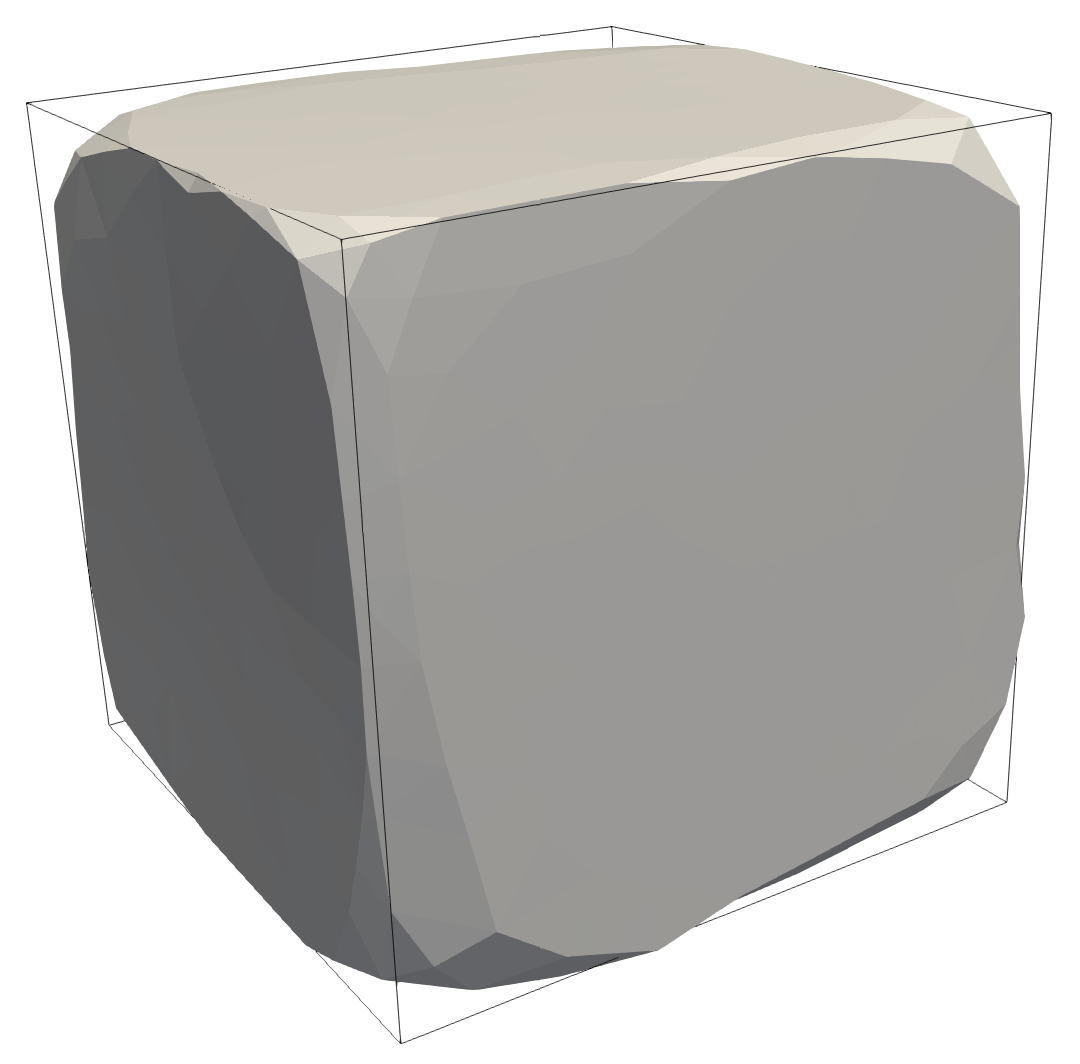}
	\includegraphics[width=0.32\textwidth]{./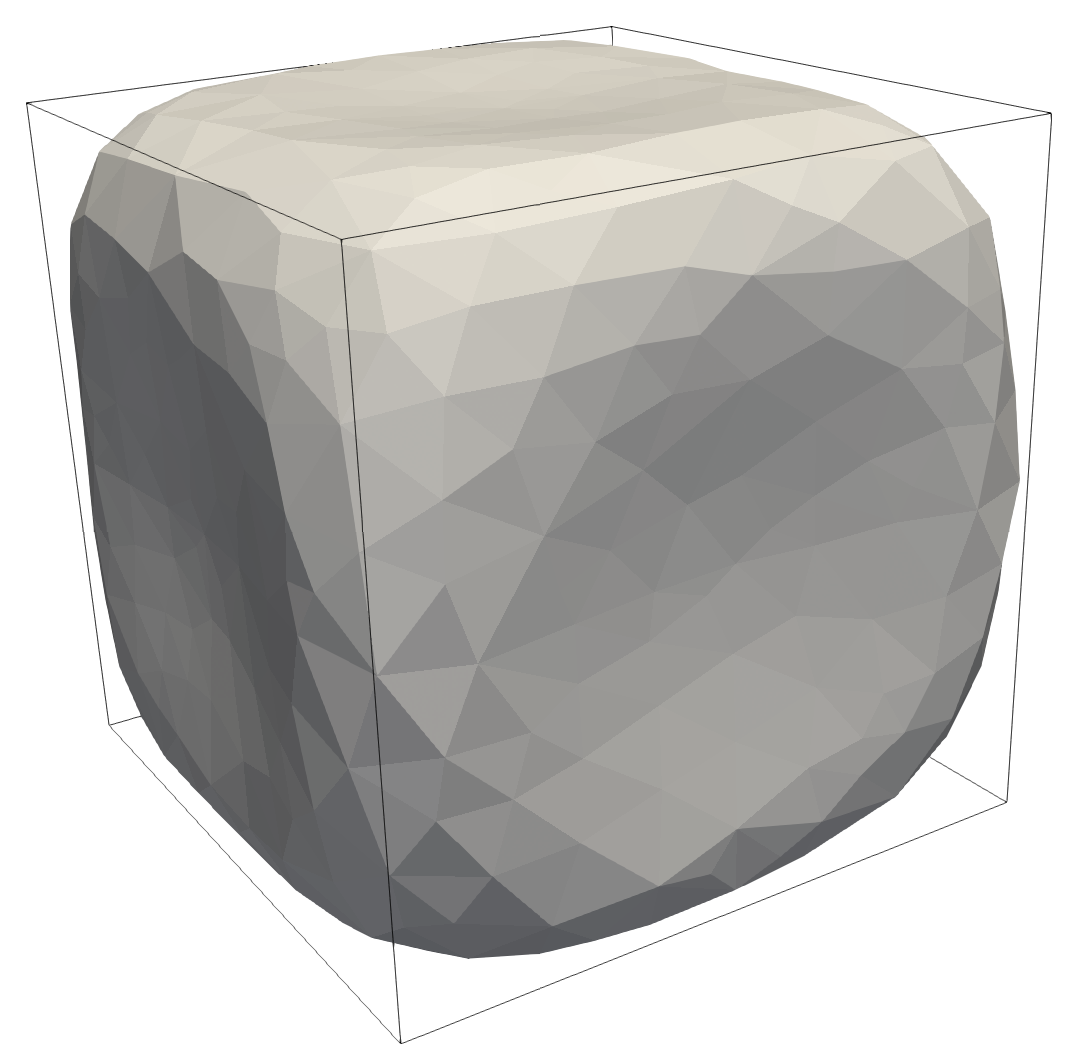}
	\includegraphics[width=0.32\textwidth]{./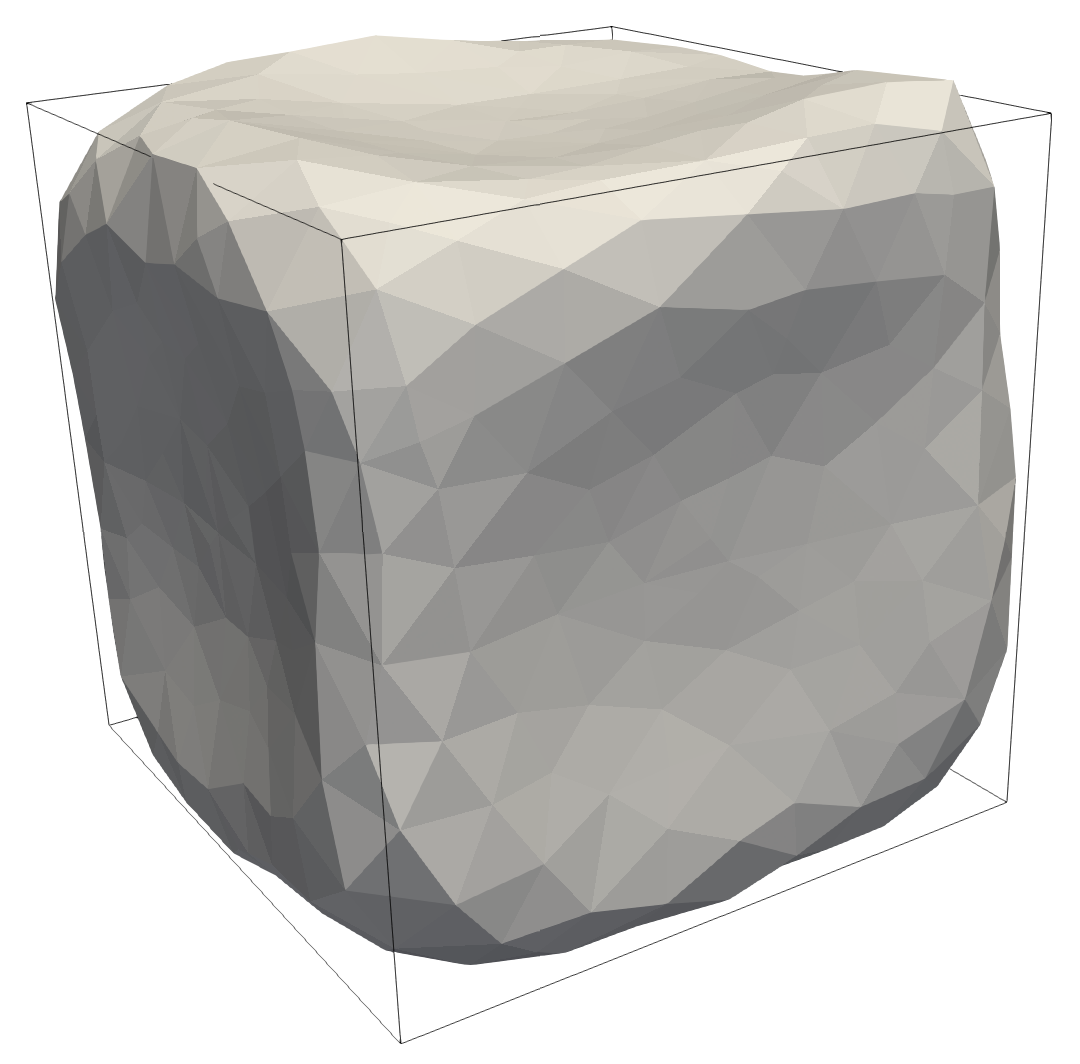}
	\caption{Top row: setting without noise; left: total variation regularization, $\beta = 10^{-6}$ and $90$~iterations; middle: surface area regularization with $\gamma = 5\cdot 10^{-5}$ and $1129$~iterations; right: surface area regularization with $\gamma = 2\cdot 10^{-5}$ and $978$~iterations. Bottom row: setting with noise; left: total variation regularization with $\beta = 10^{-6}$ and $173$~iterations with remeshing after iteration~$121$; middle: surface area regularization with $\gamma = 5\cdot 10^{-5}$ and $1016$~iterations with remeshing after iteration~$539$; right: surface area regularization with $\gamma = 2\cdot 10^{-5}$ and $987$~iterations with remeshing after iteration~$308$.}
	\label{fig:results_IOP}
\end{figure}
As is expected and well known, the use of surface area regularization leads to results in which the identified inclusion $\Gamma_1$ is smoothed out.
This can be explained by the observation that the gradient based minimization of the surface area yields a mean curvature flow.
By contrast, our novel prior \eqref{eq:tv_of_normal_discrete_repeated} allows for piecewise flat shapes and thus the interface $\Gamma_1$ is closely reconstructed in the noise-free situation.
Even in the presence of noise, the reconstruction is remarkably good.
In particular, the flat lateral surfaces and sharp edges can be identified quite well.

\section{Conclusions}
\label{sec:conclusions}

In this paper we introduced a discrete analogue of the total variation prior for the normal vector field as shown in \cite{BergmannHerrmannHerzogSchmidtVidalNunez2019:1_preprint}.
While we are currently unable to characterize all minimizers of its discrete counterpart, we showed that the icosahedron and a cube with crossed diagonals are stationary under an area constraint.
We conjecture that the full set of minimizers is much richer than this, in particular when the connectivity is included as design unknown.
It has been argued in \cite[Section~3.3]{PellisKilianDellingerWallnerPottmann2019} that minimal energy is achieved for meshes which are not triangular, but whose faces are approximately rectangular.

We proposed, described and implemented a split Bregman (ADMM) scheme for the numerical solution of shape optimization problems involving the discrete total variation of the normal.
In contrast to a Euclidean ADMM as proposed for instance in \cite{GoldsteinOsher2009}, the normal vector data belongs to the sphere $\sphere{2}$.
Therefore, the formulation of the ADMM method requires concepts from differential geometry.
In particular, the discrete setting utilizes logarithmic maps and parallel transport of tangent vectors.
An analysis of the ADMM scheme is beyond the scope of this paper and will be presented elsewhere.

We demonstrate the utility of the discrete total variation of the normal as a shape prior in a geometric inverse problem, in which we aim to detect a polyhedral inclusion.
Unlike the popular surface area regularization, our prior allows for piecewise flat shapes.

\appendix
\section{The Sphere as a Riemannian Manifold}
\label{sec:sphere_as_a_Riemannian_manifold}

In this section we provide some useful formulas for the sphere
\begin{equation*}
	\sphere{2} = \{ \bn \in \R^3: \abs{\bn}_2 = 1 \}
\end{equation*}
equipped with the Riemannian metric obtained from the pull back of the Euclidean metric from the ambient space $\R^3$.
We are going to represent points $\bn \in \sphere{2}$ by vectors in $\R^3$.
Moreover, we identify the tangent space at $\bn$ with the two-dimensional subspace
\begin{equation*}
	\tangent{\bn}{\sphere{2}}
	=
	\{ \bxi \in \R^3: \bxi^\top \bn = 0 \}
	.
\end{equation*}
We utilize the Riemannian metric $\Riemannian{\ba}{\bb} = \ba^\top \bb$ in $\tangent{\bn}{\sphere{2}}$ and the norm $\absRiemannian{\ba} = (\ba^\top \ba)^{1/2}$.

The geodesic distance between any two $\bn, \bn' \in \sphere{2}$ is given by
\begin{equation}
	\label{eq:geodesic_distance_on_the_sphere}
	d(\bn,\bn') 
	= 
	\arccos(\bn^\top \bn')
	.
\end{equation}

The geodesic curve $\geodesic{\bn}{\bxi}{\,\cdot\,} \colon \R \to \sphere{2}$ departing from $\bn \in \sphere{2}$ in the direction of $\bxi \in \tangent{\bn}{\sphere{2}}$ is given by
\begin{equation}
	\label{eq:geodesic_on_the_sphere}
	\geodesic{\bn}{\bxi}{t} 
	=
	\cos \bigh(){t \, \absRiemannian{\bxi}}\bn + \sin \bigh(){t \, \absRiemannian{\bxi}}\frac{\bxi}{\absRiemannian{\bxi}}.
\end{equation}
The exponential map is thus given by
\begin{equation}
	\label{eq:exponential_map_on_the_sphere}
	\myexp{\bn}{\bxi}
	=
	\geodesic{\bn}{\bxi}{1} 
	=
	\cos \bigh(){\absRiemannian{\bxi}} \, \bn + \sin \bigh(){\absRiemannian{\bxi}}\frac{\bxi}{\absRiemannian{\bxi}}
	.
\end{equation}
The logarithmic map is the inverse of the exponential map w.r.t.\ to the tangent direction $\bxi$.
In other words, $\bxi = \mylog{\bn}{\bn'}$ holds if any only if $\bxi$ is the unique element in $\tangent{\bn}{\sphere{2}}$ such that $\myexp{\bn}{\bxi} = \bn'$ holds.
The logarithmic map is well-defined whenever $\bn \neq -\bn'$ holds.
In this case, we have
\begin{equation}
	\label{eq:logarithmic_map_on_the_sphere}
	\mylog{\bn}{\bn'} 
	= 
	d(\bn,\bn') \frac{\bn'-(\bn^\top \bn') \, \bn}{\absRiemannian{\bn'-(\bn^\top \bn') \, \bn}}
	.
\end{equation}
Finally we require the concept of parallel transport of a tangent vector from one tangent space to another, along the unique shortest geodesic connecting the base points.
Specifically, the parallel transport $P_{\bn\to \bn'} \colon \tangent{\bn}{\sphere{2}} \to \tangent{\bn'}{\sphere{2}}$ along the unique shortest geodesic $\geodesic{\bn}{\mylog{\bn}{\bn'}}{\,\cdot\,}$ connecting $\bn$ and $\bn' \neq -\bn$ is given by
\begin{equation}
	\label{eq:parallel_transport_on_the_sphere}
	\begin{aligned}
		P_{\bn\to \bn'}(\bxi)
		&
		=
		\bxi - \frac{\bxi^\top(\mylog{\bn}{\bn'})}{d^2(\bn,\bn')}(\mylog{\bn}{\bn'} + \mylog{\bn'}{\bn})
		\\
		&
		=
		\bxi + \bigh(){\cos(\absRiemannian{\bv}) \, \bu - \bu - \sin(\absRiemannian{\bv}) \, \bn} \, \bu^\top \bxi
		,
	\end{aligned}
\end{equation}
see for instance \cite{HosseiniUschmajew2017} and \cite[Section~2.3.1]{Persch2018}, repectively.
Here we used the abbreviations $\bv = \mylog{\bn}{\bn'}$, $\absRiemannian{\bv} = d(\bn,\bn')$ and $\bu = \frac{\bv}{\absRiemannian{\bv}}$.
To see that both expressions in~\eqref{eq:parallel_transport_on_the_sphere} coincide ---after plugging in
the definition of the geodesic distance~\eqref{eq:geodesic_distance_on_the_sphere}--- it remains to show that
\begin{equation*}
	-\frac{\bn^\top\bn'\mylog{\bn}{\bn'}}{\absRiemannian{\mylog{\bn}{\bn'}}}
	+ \sqrt{1-(\bn^\top\bn')^2} \, \bn
	= 
	\frac{\mylog{\bn'}{\bn}}{\absRiemannian{\mylog{\bn'}{\bn}}}.
\end{equation*}
which holds true since the norm of the logarithmic map is
\begin{equation*}
	\absRiemannian{\mylog{\bn}{\bn'}} 
	= 
	\absRiemannian{\bn' - \bn^\top\bn'\bn} 
	= 
	\sqrt{ (\bn'^\top\bn') - (\bn^\top\bn')}
	=
	\sqrt{ 1 - (\bn^\top\bn')}
	= \absRiemannian{\mylog{\bn'}{\bn}}.
\end{equation*}
Hence multiplying with the denominator of the first term in \eqref{eq:parallel_transport_on_the_sphere} yields the equality with the second term, since using the definition of the logarithmic map we obtain
\begin{equation*}
	(\bn^\top\bn') \, \bn - (\bn^\top\bn')^2 \bn - (1-(\bn^\top\bn')^2) \, \bn 
	= 
	\bn - (\bn^\top\bn') \, \bn'.
\end{equation*}

\subsection*{Acknowledgments}

The authors would like to thank two anonymous reviewers for their constructive criticism which helped improve the paper.

This work was supported by DFG grants HE~6077/10--1 and SCHM~3248/2--1 within the \href{https://spp1962.wias-berlin.de}{Priority Program SPP~1962} (\emph{Non-smooth and Complementarity-based Distributed Parameter Systems: Simulation and Hierarchical Optimization}), which is gratefully acknowledged.

\ifcsname newrefcontext\endcsname
	\newrefcontext[sorting=nyt]
\printbibliography
\else
\fi

\end{document}